\newcommand{\E}{\mathbb{E}}
\newcommand{\R}{\mathbb{R}}
\newcommand{\F}{\mathcal{F}}
\newcommand{\Prob}{\mathbb{P}}
\newcommand{\pa}{\partial}
\newcommand{\tr}{\mathbf{t}}
\newcommand{\Tr}{\mathbf{T}}
\DeclareMathOperator*{\argmin}{arg\,min}
\newtheorem{theorem}{Theorem}[section]
\theoremstyle{definition}
\newtheorem{definition}[theorem]{Definition}
\theoremstyle{remark}
\newtheorem{remark}[theorem]{Remark}
\numberwithin{equation}{section}
\newcommand{\tone}{
	\includegraphics{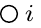}
}
\newcommand{\ttwo}{
	\includegraphics{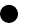}
}
\newcommand{\tthree}{
	\includegraphics{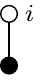}
}
\newcommand{\tfour}{
	\includegraphics{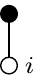}
}
\newcommand{\tfive}{
	\includegraphics{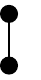}
}
\newcommand{\tsix}{
	\includegraphics{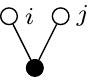}
}
\newcommand{\tseven}{
	\includegraphics{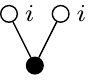}
}
\newcommand{\teight}{
	\includegraphics{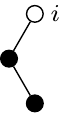}
}
\newcommand{\tnine}{
	\includegraphics{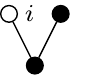}
}
\newcommand{\tten}{
	\includegraphics{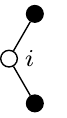}
}
\title{Stochastic symplectic Runge--Kutta methods for the strong approximation of Hamiltonian systems with additive noise}
\author{Weien Zhou\thanks{College of Science, National University of Defense Technology, Changsha 410073, China. (weienzhou@outlook.com).}, Jingjing Zhang\thanks{School of Mathematics and Information Science, Henan Polytechnic University, Jiaozuo 454001, China. (zhangjj@hpu.edu.cn).}, Jialin Hong\thanks{Institute of Computational Mathematics and Scientific/Engineering Computing, Chinese Academy of Sciences, Beijing, 100190, China. (hjl@lsec.cc.ac.cn).}, Songhe Song\thanks{State Key Laboratory of High Performance Computing, National University of Defense Technology, Changsha 410073, China. (shsong@nudt.edu.cn).}}
\date{}
\begin{document}
\maketitle

\begin{abstract}
In this paper, we construct stochastic symplectic Runge--Kutta (SSRK) methods of high strong order for Hamiltonian systems with additive noise. By means of colored rooted tree theory, we combine conditions of mean-square order 1.5 and symplectic conditions to get totally derivative-free  schemes. We also achieve mean-square order 2.0 symplectic schemes for a class of second-order Hamiltonian systems with additive noise by similar analysis. Finally, linear and non-linear systems are solved numerically, which verifies the theoretical analysis on convergence order. Especially for the stochastic harmonic oscillator with additive noise, the linear growth property can be preserved exactly over long-time simulation.\\

\bf{Keywords: Stochastic differential equation; Stochastic Runge--Kutta method; Symplectic integrator; Mean-square convergence}

\end{abstract}

\section{Introduction}
Stochastic differential equations (SDEs) have wide applications in many disciplines like physics, engineering, finance, etc.,
when we take stochastic perturbation into consideration. However,
it is difficult to find explicit solutions of SDEs analytically. There has been tremendous interest in developing effective and reliable numerical methods for SDEs during the last few decades (see e.g. monographs \cite{Kloeden1992,Milstein2004}).
Runge--Kutta (RK) methods are an important family of one-step numerical integrators for ordinary differential equations (ODEs), and recently they have been extended to solve SDEs both for strong approximations \cite{Burrage2000,Wangpeng2008three,Roßler2010,Komori2013rock} and weak approximations \cite{Roßler2009,Debrabant2010,Komori2013,Tang2016}. Especially, the colored rooted tree theory can be applied \cite{Burrage2000,Roßler2010}, which provides an intuitive way to study the order conditions of stochastic Runge--Kutta (SRK) methods.

There exist a variety of crucial issues in designing practical and reliable numerical schemes for SDEs, especially the preservation of dynamics or geometric structures over long time. A notable special case which will be of interest here is the case where the SDEs are even dimensional $(d=2n)$, such that the SDEs possess the following form: 
\begin{equation}\label{e:chamilton}
	\left\{
	\begin{aligned}
		dX(t) &= J^{-1}\nabla H_0 \big(t,X(t)\big)dt + \sum_{r=1}^{m} J^{-1}\nabla H_r \big(t,X(t)\big)\circ dW_r(t),\\
		X(0) &= X_0,
	\end{aligned}\right.
\end{equation}
where  $J=\bigl(\begin{smallmatrix}0& I_n\\-I_n&0
\end{smallmatrix}\bigr)$ is the standard $2n$-dimensional symplectic matrix with $n$-dimensional  identity matrix $I_n$,  $H_i(t,p,q)$ $(i=0,\dots, m)$ are differentiable functions, and $W_r(t) (r=1,\dots,m)$ are standard independent Wiener processes defined on some probability space $(\Omega, \F, \Prob)$. If we denote by $X(t) = \big(P(t), Q(t)\big)^\top$ the solution of \eqref{e:chamilton}, then it can be rewritten as
\begin{equation}\label{e:hamilton}
	\left\{
	\begin{aligned}
		dP(t) &= -\frac{\pa H_0}{\pa q} dt - \sum_{r=1}^{m} \frac{\pa H_r}{\pa q}\circ dW_r(t), \quad & P(0)=p_0,\\
		dQ(t) &= \phantom{-}\frac{\pa H_0}{\pa p} dt + \sum_{r=1}^{m} \frac{\pa H_r}{\pa p}\circ dW_r(t), \quad & Q(0)=q_0.
	\end{aligned}\right.
\end{equation}
This type of SDEs is called a stochastic Hamiltonian system, whose solution is a phase flow almost surely \cite{Milstein2004}. Introduce the differential 2-form
\begin{equation}
	\omega^2 = dp\wedge dq = \sum_{i=1}^{d} dp^i\wedge dq^i,
\end{equation}
and it turns out that the phase flow  of system \eqref{e:hamilton} preserves the symplectic structure (see \cite[Chapter 4]{Milstein2004} for details)
\begin{equation}
	dP(t)\wedge dQ(t) = dp\wedge dq,
\end{equation}
which is an extension of the remarkable property of deterministic Hamiltonian systems \cite{Hairer2006}.
Thus, it is natural to construct numerical integrators inheriting this symplectic property as well. From this point of view, a numerical method with approximation $\{P_n,Q_n\}$ is symplectic provided
\begin{equation}
	dP_{n+1}\wedge dQ_{n+1} = dP_{n}\wedge dQ_{n}.
\end{equation}

Hamiltonian systems perturbed by external Gaussian noises, especially the second-order systems due to Newton's second law of motion  \cite{Milstein2004,Burrage2007,Gitterman2005}, are common and significant in scientific applications. This type of systems help to describe the traditional Hamiltonian systems driven by random forces, which may give rise to essential differences in dynamical evolutions (especially stochastic oscillators \cite{Cruz2016}). Many efforts have been made to construct numerical methods focusing on this type of systems.
A series of symplectic methods is obtained by adding stochastic terms in the deterministic symplectic RK type methods in \cite{Milstein2002add}. \cite{Burrage2012} extends the ideas of Hamiltonian boundary value methods to construct low rank symplectic RK methods. Considering preserving the expectation of the Hamiltonian, \cite{Burrage2014} proposes a class of effective SRK methods.
In \cite{Ma2012}, the authors also consider a class of SRK methods in low stage case for stochastic Hamiltonian systems using stochastic Taylor expansion, which are of mean-square order 1.0. For stochastic oscillators with high frequency, \cite{Cohen2012} proposes an approach based on the variation-of-constants formula, which permits the use of large step-size. In \cite{Sun2017}, the authors propose a kind of numerical methods based on the Pad\'{e} approximations for two kinds of linear stochastic Hamiltonian systems.

Based on \cite{Burrage2000, Roßler2010}, we propose a class of stochastic symplectic Runge--Kutta methods, totally derivative-free, for Hamiltonian systems with additive noise, which are able to reach mean-square order 1.5 and 2.0 in some special cases. The key point to achieve high mean-square order convergence is the additional increments embedded in our schemes. To analyze the convergence order, the main technique we use here is the colored rooted tree theory in the sense of It\^o, so that the order conditions are quite intuitive and flexible to derive.

This paper is organized as follows. In Section \ref{sec:rooted_tree}, the colored rooted tree theories for SDEs are briefly reviewed, which can be used to construct our high mean-square order methods later. Section  \ref{sec:order_add} gives  order conditions for SRK methods aiming at SDEs with additive noise, under which some classes of mean-square order 1.5 schemes are proposed. For Hamiltonian systems with additive noise, the symplectic conditions are given in Section \ref{sec:sym_condition}, combined with which we present the SSRK methods. In Section \ref{sec:2order_add}, we pay attention to a special form of second-order Hamiltonian systems with additive noise. According to its elegant structure, we simplify the order conditions and obtain mean-square order 2.0 without further effort. Finally, in Section \ref{sec:num}, numerical experiments are performed for linear and non-linear systems in order to verify the foregoing order conditions and geometric properties especially the linear growth property.

\section{The colored rooted tree theory}\label{sec:rooted_tree}
Without loss of generality, we restrict consideration to autonomous systems (the coefficients of SDEs do not depend on $t$ explicitly) in this part.
First, we recall some basic facts about SRK methods for general $d$-dimensional SDE in the sense of It\^o:
\begin{equation}\label{e:sdeIto}
\left\{
\begin{aligned}
dX(t) &= f\big(X(t)\big)dt + \sum_{r=1}^m g_r\big(X(t)\big) dW_r(t), \quad t\in [0, T],\\
X(0) &= X_0,
\end{aligned}\right.
\end{equation}
where $f$ and $g_r$, $r=1,\dots,m$, are $\R^d$-valued functions fulfilling a global Lipschitz condition.

The basic tool of constructing our numerical methods is the colored rooted tree theory in \cite{Roßler2010} which is an extension of \cite{Burrage2000} to analyze the order conditions of SRK methods. Thus, we briefly list some definitions and theorems used in constructing numerical schemes later.

Let $\tau_0 = \ttwo$ denote the deterministic node and $\tau_i = \tone$  denote the stochastic node with color $i$, where the subscript $i$ ($i=1,\dots,m$) is actually associated with the $i$th component of the driving Wiener process of SDE \eqref{e:sdeIto}. Let $\Tr$ be the set of all rooted trees with $m+1$ colors $(0,1,\dots,m)$ and $\phi\in \Tr$ stand for the empty tree (the tree without any node).  Moreover, Let $\mathbf{t} = [\mathbf{t}_1,\dots,\mathbf{t}_l]_i \in \Tr$ ($i=0,\dots,m$) be the tree obtained by grafting the roots of subtrees $\mathbf{t}_1,\dots,\tr_l\in \Tr$ each to a common root with node $\tau_i$. For a tree $\mathbf{t}\in \Tr$, let $d(\mathbf{t})$, $s(\mathbf{t})$ denote the number of $\tau_0$ and $\tau_i$ ($i\neq 0$) respectively, so that the order $\rho(\mathbf{t})$ of a tree $\mathbf{t}$ is defined by $\rho(\mathbf{t}) = d(\mathbf{t}) + \frac{1}{2}s(\mathbf{t})$ and $\rho(\phi)=0$. Specific examples of this kind of colored rooted trees will be given in Section \ref{sec:order_add}.

\begin{definition}
	For each tree $\tr\in \Tr$, the elementary differential, a vector-valued function $F(\tr): \R^d \rightarrow \R^d$, is defined recursively as follows.
	\begin{enumerate}
		\item $F(\phi)(x) := x$, i.e., $F(\phi)$ is the identity mapping.
		\item $F(\tau_0)(x) := f(x)$, $F(\tau_i)(x) := g_i(x)$ $(i=1,\dots,m)$ for a single node.
		\item  For a tree $\tr\in \Tr$ with more than one node,
		\begin{equation}
		F(\tr)(x) := \left\{
		\begin{aligned}
		f^{(l)}(x)\Big(F(\tr_1)(x), \dots, F(\tr_l)(x)\Big) \quad &\text{for} \quad \tr=[\tr_1,\dots,\tr_l]_0,\\
		g_i^{(l)}(x)\Big(F(\tr_1)(x), \dots, F(\tr_l)(x)\Big) \quad &\text{for}\quad \tr=[\tr_1,\dots,\tr_l]_i,
		\end{aligned}\right.
		\end{equation}
	\end{enumerate}
	where $f^{(l)}$ and $g_i^{(l)}$ are the symmetric $l$-linear differential operators. For instance, the $\nu$-th element of $f^{(l)}(F(\tr_1),\dots,F(\tr_l)\big)$ is 
	\[\Big(f^{(l)}(F(\tr_1),\dots,F(\tr_l)\big)\Big)_\nu = \sum_{J_1,\dots,J_l=1}^{d}\frac{\pa^l f_\nu}{\pa x^{J_1}\cdots\pa x^{J_l}} \big( F(\tr_1)_{J_1},\dots,F(\tr_l)_{J_l} \big).\]
\end{definition}

\begin{definition}
	Recursively define a multiple stochastic integral $I(\tr)(h)$ for each tree $\tr\in \Tr$ and $h>0$  as 
	\begin{equation}
	I(\tr)(h) :=\left\{
	\begin{aligned}
	& 1, &\text{for} \quad \tr &=\phi,\\
	& \int_{0}^{h}\prod_{j=1}^{l}I(\tr_j)(s)\,ds, &\text{for} \quad \tr&=[\tr_1,\dots,\tr_l]_0, \\
	& \int_{0}^{h}\prod_{j=1}^{l}I(\tr_j)(s) \,dW_i(s), &\text{for} \quad \tr&=[\tr_1,\dots,\tr_l]_i.
	\end{aligned}\right.
	\end{equation}
Moreover, let $I_{i_1,\dots,i_k}(h) := I([\dots[i_1]\dots]_{i_k})(h) $ represent the It\^o multiple integral where integration with respect to $dW_0(s):=ds$ if $i_j=0$, or $dW_q(s)$ if $i_j=q$ $ (j=1,\dots,k,q=1,\dots,m)$:
	\begin{equation}
	I_{i_1,\dots,i_k} (h) = \int_{0}^{h}\int_{0}^{s_{k-1}}\cdots\int_{0}^{s_1} dW_{i_1}(s)\cdots dW_{i_{k-1}}(s_{k-2})\,dW_{i_{k}}(s_{k-1}).
	\end{equation}
\end{definition}

Then the solution $X(h)$ of system \eqref{e:sdeIto} can be represented by a B-series of $F(\tr)(X_0)$ and $I(\tr)(h)$ for $\tr\in \Tr$ \cite{Roßler2010}. Next we can also give expansion of the following form of $s$-stage SRK methods for system \eqref{e:sdeIto} \cite{Burrage2000}
\begin{equation}\label{e:srk}
\left\{
\begin{aligned}
Y_i &= y_n + h\sum_{j=1}^{s}a_{ij}f(Y_j) + \sum_{k=1}^{m}\sum_{j=1}^{s}Z_{ij}^{(k)}g_k(Y_j), \quad i=1,\dots,s,\\
y_{n+1} &= y_n + h\sum_{i=1}^{s}\alpha_{i}f(Y_i) +\sum_{k=1}^{m}\sum_{i=1}^{s}z_i^{(k)}g_k(Y_i),
\end{aligned}\right.
\end{equation}
which can be simply represented in tableau form as
\begin{equation}
\begin{tabular}{l|llll}
& $Z^{(0)}$      & $Z^{(1)}$ & \dots   & $Z^{(m)}$ \\ \hline
& $z^{(0)}$ & $z^{(1)}$ & $\dots$ & $z^{(m)}$
\end{tabular}
\end{equation}
where $Z^{(0)}=hA$ and $z^{(0)}=h\alpha$ denote a matrix and an update vector of deterministic coefficients while matrices $Z^{(1)},\dots,Z^{(m)}$ and update vectors $z^{(1)},\dots,z^{(m)}$ have elements which are certain random variables.

\begin{definition}
	For every $\tr =  [\tr_1,\dots,\tr_u]_k\in \Tr$,  $k\in\{0,\dots, m\}$, let 
	\begin{equation}
	\Phi(\tr) := z^{(k)^\top} \prod_{i=1}^{u}\Psi(\tr_i),
	\end{equation}
	where for each subtree $\tr_i$ ($i\in \{1,\dots, u\}$) of $\tr$,
	\begin{equation}
	\Psi(\tr_i) := \left\{
	\begin{aligned}
	& e, &\tr_i &= \phi,\\
	& Z^{(l)} \prod_{j=1}^{v}\Psi(\bar{\tr}_j), &\tr_i &=  [\bar{\tr}_1,\dots,\bar{\tr}_v]_l,\, l\in\{0,\dots, m\},
	\end{aligned}\right.
	\end{equation}
	where $e$ is the $d$-dimensional column vector whose elements are all 1. Note that, $\Psi(\tr)$ is defined recursively, and the product between vectors means component-wise multiplication.
\end{definition}
For the approximation $\{y_n\}$ calculated by the SRK method \eqref{e:srk}, we can also get an expansion with $I(\tr)$ and $F(\tr)(y_0)$ similar to the exact solution $X(t_n)$ \cite{Roßler2010}. Throughout the paper, equidistant time-step $h$ will be used in time discretization $\{0=t_0,\dots,t_N=T\}$, and will be omitted in some cases. Let $y_k$ be the numerical approximation of system \eqref{e:sdeIto} at time $t_k$, $k=0,\dots,N$.
Then mean-square convergence is considered:
\begin{definition}
	The numerical solution $\{y_k\}$ is said to have mean-square order $p$ ($>0$) if 
	\begin{equation}
	\big(\E|y_k - X(t_k)|^2 \big)^{\frac{1}{2}} = O(h^p),
	\quad k = 0,\dots,N,
	\end{equation}
	where $\E(\cdot)$ denotes the expectation of a random variable.
\end{definition}

Given the preparations above, we will mainly make use of the following theorem which can be found in \cite{Roßler2010} in detail.

\begin{theorem}\label{thm:order}
	Let $p \in \frac{1}{2}\mathbb{N}_0$ and $f, g_j\in C^{\lceil p\rceil, 2p+1}([0,T]\times\R^d,\R^d)$ for $j=1,\dots,m$. Then the SRK method \eqref{e:srk} has mean-square order $p$ if the following conditions are fulfilled
	\begin{enumerate}
		\item for all $\tr\in \Tr$ with $\rho(\tr)\leq p$
		\begin{equation}
		I(\tr) = \Phi(\tr),\quad \text{P-a.s}\, ;
		\end{equation}
		\item for all $\tr\in \Tr$ with $\rho(\tr) = p + \frac{1}{2}$
		\begin{equation}
		\E\Big(I(\tr)\Big) = \E\Big( \Phi(\tr) \Big).
		\end{equation}
	\end{enumerate}
\end{theorem}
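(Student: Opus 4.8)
The plan is to obtain a B-series expansion for both the exact solution $X(h)$ and the numerical one-step map $y_1$ of the SRK method~\eqref{e:srk}, and then to compare them term-by-term indexed by colored rooted trees. First I would recall (from~\cite{Roßler2010}) that the stochastic Taylor expansion of $X(h)$ around $X_0$ can be written as
\begin{equation*}
  X(h) = \sum_{\tr\in\Tr,\ \rho(\tr)\le q} \frac{1}{\sigma(\tr)} F(\tr)(X_0)\, I(\tr)(h) + R_q(h),
\end{equation*}
where $\sigma(\tr)$ is the usual symmetry/combinatorial factor, and the remainder $R_q(h)$ collects trees of order $>q$; a standard $L^2$ estimate under the smoothness hypothesis $f,g_j\in C^{\lceil p\rceil,2p+1}$ gives $\big(\E|R_q(h)|^2\big)^{1/2}=O(h^{q+1/2})$ once $q\ge p$ (this is where the regularity exponent $2p+1$ is consumed). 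Next I would apply the analogous expansion to the numerical increment: substituting the stage equations into the update and iterating yields
\begin{equation*}
  y_1 = \sum_{\tr\in\Tr,\ \rho(\tr)\le q} \frac{1}{\sigma(\tr)} F(\tr)(y_0)\, \Phi(\tr) + \widetilde R_q(h),
\end{equation*}
where $\Phi(\tr)$ is exactly the elementary weight built from the tableau coefficients via the recursion defining $\Psi$ and $\Phi$ above, and $\widetilde R_q(h)$ is again $O(h^{q+1/2})$ in $L^2$ provided the coefficients $Z^{(k)},z^{(k)}$ have the right moment bounds (which one should assume or verify: $Z^{(k)},z^{(k)}$ are polynomials in $h$ and in the iterated integrals $I_{i_1,\dots,i_k}(h)$, hence have all moments of the correct magnitude).

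With both expansions in hand, I would take $y_0=X_0$ and subtract, so that the local (one-step) error is
\begin{equation*}
  y_1 - X(h) = \sum_{\tr\in\Tr,\ \rho(\tr)\le q} \frac{1}{\sigma(\tr)} F(\tr)(X_0)\,\big(\Phi(\tr)-I(\tr)\big) + O(h^{q+1/2})\ \text{in } L^2 .
\end{equation*}
Now the hypotheses of the theorem enter: condition~1 kills every term with $\rho(\tr)\le p$ pathwise (so those contribute exactly $0$), and for trees with $\rho(\tr)=p+\tfrac12$ condition~2 ensures $\E\big(\Phi(\tr)-I(\tr)\big)=0$. The key point is that $\Phi(\tr)-I(\tr)$ is then a centered random variable that is measurable with respect to the increments on $[0,h]$, and $F(\tr)(X_0)$ is $\F_0$-measurable (and bounded on the event we condition on, using local Lipschitz/polynomial-growth truncation); so the contribution of each order-$(p+\tfrac12)$ tree to the error has \emph{conditional} expectation zero given $\F_0$, i.e.\ it is a martingale-type increment of size $O(h^{p+1/2})$ in $L^2$, while its mean is $O(h^{p+1})$. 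All remaining trees have $\rho(\tr)\ge p+1$, giving $L^2$-size $O(h^{p+1})$ directly. Hence the local error satisfies the two-sided estimate
\begin{equation*}
  \big|\E(y_1-X(h)\mid\F_0)\big| = O(h^{p+1}),\qquad \big(\E|y_1-X(h)|^2\big)^{1/2} = O(h^{p+1/2}).
\end{equation*}

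The final step is the global error propagation, i.e.\ the stochastic analogue of the Lady Windermere's fan / Milstein's fundamental theorem (see~\cite{Milstein2004}): from a one-step error with mean $O(h^{p+1})$ and mean-square $O(h^{p+1/2})$, together with the global Lipschitz assumption on $f,g_r$ (which gives stability of the one-step map and lets accumulated errors be controlled), one concludes $\big(\E|y_k-X(t_k)|^2\big)^{1/2}=O(h^{p})$ uniformly for $k=0,\dots,N$, which is the claim. I expect the main obstacle to be the bookkeeping in the two B-series expansions — proving that the numerical increment really does expand with the weights $\Phi(\tr)$ as defined by the $\Psi$-recursion, and matching the combinatorial factors $\sigma(\tr)$ on both sides so that the comparison is legitimate — rather than the analytic estimates, which are routine given the stated smoothness. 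A secondary technical point is handling the non-global-Lipschitz elementary differentials $F(\tr)$ (derivatives of $f,g_r$ need not be bounded) when bounding the local error moments; the standard fix is a localization argument, cutting off on the set where $|X_0|$ and the stage values are moderate and using the smoothness hypothesis there, then absorbing the small-probability complement.
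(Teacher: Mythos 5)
The paper does not actually prove this theorem: it is imported verbatim from \cite{Roßler2010} (``which can be found in \cite{Roßler2010} in detail''), so there is no in-paper proof to compare against. Your outline reproduces the standard argument of that reference: expand both the exact solution and the one-step numerical map as stochastic B-series indexed by colored rooted trees, use condition~1 to cancel all trees of order at most $p$ pathwise and condition~2 to make the order-$(p+\frac{1}{2})$ trees centered (hence contributing $O(h^{p+1})$ to the local mean error and $O(h^{p+1/2})$ to the local mean-square error), and then invoke Milstein's fundamental theorem \cite{Milstein2004} to pass from the local estimates to global mean-square order $p$. That is the correct structure, and the key technical points are rightly identified --- in particular that $\Phi(\tr)-I(\tr)$ depends only on the increments over the current step and is therefore independent of $\F_0$, so its conditional and unconditional expectations coincide and $F(\tr)(X_0)$ factors out, and that one needs moment bounds on the random tableau entries $Z^{(k)},z^{(k)}$; the remaining work is exactly the B-series bookkeeping and remainder estimates you flag, which is what \cite{Roßler2010} carries out.
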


\section{Order conditions of SRK methods for SDEs with additive noise}\label{sec:order_add}
Concerning the specific problem we focus on, i.e., Hamiltonian systems with additive noise, we consider the following It\^o sense SDE
\begin{equation}\label{e:sdeAdd}
\left\{
\begin{aligned}
dX(t) &= f\big(t, X(t)\big)dt + \sum_{r=1}^m g_r\big(t\big) dW_r(t), \quad t\in [0, T],\\
X(0) &= X_0. 
\end{aligned}\right.
\end{equation}
Then the $s$-stage SRK methods for system \eqref{e:sdeAdd} with $m$ independent additive noises are given by
\begin{equation}\label{e:srk_add}
\left\{
\begin{alignedat}{3}
Y_i &= y_n & &{}+ h\sum_{j=1}^{s} a_{ij} f(t_n+c_j h, Y_j) + \sum_{r=1}^{m} I_r \sum_{j=1}^{s}b_{ij}g_r(t_n+\widehat{c}_j h) \\
	& & &{} + \sum_{r=1}^{m} \frac{I_{r0}}{h} \sum_{j=1}^{s}d_{ij}g_r(t_n+\widehat{c}_j h), \quad i=1,\dots,s,\\
y_{n+1} &= y_n & &{} + h\sum_{i=1}^{s} \alpha_{i} f(t_n+c_i h, Y_i) + \sum_{r=1}^{m} I_r \sum_{i=1}^{s}\beta_{i}g_r(t_n+\widehat{c}_i h) \\
	& & &{}+ \sum_{r=1}^{m} \frac{I_{r0}}{h} \sum_{i=1}^{s}\gamma_{i}g_r(t_n+\widehat{c}_i h),
\end{alignedat}\right.
\end{equation}
where we denote $(A)_{s\times s}=(a_{ij})$, $(B)_{s\times s}=(b_{ij})$, $(D)_{s\times s}=(d_{ij})$, $(\alpha)_{s\times 1}=(\alpha_{i})$, $(\beta)_{s\times 1}=(\beta_{i})$, $(c)_{s\times 1}=(c_i)$, $(\widehat{c})_{s\times 1}=(\widehat{c}_i)$, then the order conditions are based on these coefficients accordingly.
These kinds of SRK methods \eqref{e:srk_add} can be characterized by an extended Butcher tableau as Table \ref{tab:butcher}.

\begin{table}[h]
	\centering
	\caption{Butcher tableau for SRK methods \eqref{e:srk_add}}
	\label{tab:butcher}
	\tabulinesep=5pt
	\large
	\begin{tabu}{c|c|c|c|c}
		$c$ & $A$        & $B$       & $D$        & $\widehat{c}$ \\ \hline
		& $\alpha^\top$ & $\beta^\top$ & $\gamma^\top$ &          
	\end{tabu}
\end{table}

Based on the colored rooted tree theory in Section \ref{sec:rooted_tree}, we are able to obtain a set of order conditions guaranteeing that SRK methods \eqref{e:srk_add} obtain mean-square order 1.5 as detailed below.

\begin{theorem}\label{thm:add_rk}
	Suppose that SDE \eqref{e:sdeAdd} with $m$ independent additive noises is approximated by SRK methods \eqref{e:srk_add}. Let $f\in C^{1,3}([0,T]\times \R^d, \R^d)$ and $g_j\in C^{1}([0,T]\times \R^d)$ for $j=1,\dots,m$.
	If the coefficients of SRK methods \eqref{e:srk_add} satisfy conditions
	\begin{enumerate}
		\item $c=Ae$,
		\item $\alpha^\top e = 1$,
		\item $\beta^\top e=1$,
		\item $\gamma^\top e=0$,
	\end{enumerate}
	then they are of mean-square order 1.0. If in addition conditions
	\begin{enumerate}
		\setcounter{enumi}{4}
		\item $\alpha^\top Ae=\frac{1}{2}$,
		\item $\alpha^\top Be=0$,
		\item $\alpha^\top De=1$,
		\item $\beta^\top \widehat{c}=1$,
		\item $\gamma^\top \widehat{c}=-1$,
		\item $\alpha^\top \left((Be)^2 + \frac{(De)^2}{3} + (Be)\cdot(De)\right) = \frac{1}{2}$,
	\end{enumerate}
	are fulfilled, then the mean-square order of SRK methods \eqref{e:srk_add} equals 1.5. 
\end{theorem}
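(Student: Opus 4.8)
The plan is to verify the hypotheses of Theorem~\ref{thm:order} with $p=1$ (for the first claim) and $p=\tfrac32$ (for the second), i.e.\ to match the iterated‑integral weight $I(\tr)$ against the method weight $\Phi(\tr)$ over all colored rooted trees $\tr$ with $\rho(\tr)\le\tfrac32$, respectively $\le 2$. As a preliminary reduction I would recast \eqref{e:srk_add} in the autonomous framework of Section~\ref{sec:rooted_tree} by appending the trivial coordinate $dX^0=dt$; condition~1, $c=Ae$, is precisely the consistency requirement that makes this recasting legitimate (it forces the internal abscissae to equal the row sums of $A$, so that evaluating $f$ at $t_n+c_jh$ is compatible with the augmented stage equations, and it identifies $\alpha^\top c$ with $\alpha^\top Ae$). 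In the augmented picture one has $Z^{(0)}=hA$, $z^{(0)}=h\alpha$, and, since each additive noise carries the two increments $I_r$ and $I_{r0}/h$,
\begin{equation*}
Z^{(r)}=I_rB+\tfrac{I_{r0}}{h}D,\qquad z^{(r)}=I_r\beta+\tfrac{I_{r0}}{h}\gamma ,
\end{equation*}
while the fixed abscissa $t_n+\widehat c_jh$ at which $g_r$ is evaluated is incorporated by Taylor expanding $g_r(t_n+\widehat c_jh)=g_r(t_n)+\widehat c_jh\,\dot g_r(t_n)+O(h^2)$; in the weight calculus this means that a subtree grafted onto a stochastic node contributes through $\widehat c$ rather than through the internal‑stage increments.

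The second step is to prune the tree set using the additive, state‑independent structure of the noise. Because $g_r$ does not depend on the spatial variables, $g_r^{(l)}\equiv0$ in those directions for $l\ge1$, and working in the augmented system one checks that $F(\tr)\equiv0$ unless every stochastic node of $\tr$ is a leaf. Up to $\rho(\tr)=2$ this leaves only $\tau_i$ ($\rho=\tfrac12$), $\tau_0$ ($\rho=1$), $[\tau_i]_0$ and $[\tau_0]_i$ ($\rho=\tfrac32$), and $[\tau_0]_0$ and $[\tau_i,\tau_j]_0$ ($\rho=2$), with elementary differentials $F([\tau_i]_0)=f_xg_i$, $F([\tau_0]_i)=\dot g_i$, $F([\tau_0]_0)=\partial_tf+f_xf$, $F([\tau_i,\tau_j]_0)=f^{(2)}(g_i,g_j)$. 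This finiteness is also why the smoothness hypotheses can be weakened to $f\in C^{1,3}$, $g_j\in C^{1}$: only these differentials (and those of the order‑$\tfrac52$ error trees) ever enter.

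The third step is the term‑by‑term matching. For $\rho(\tr)\le1$ (matched $\Prob$‑a.s.), $\tau_0$ gives $h=h\,\alpha^\top e$, i.e.\ condition~2, and $\tau_i$ gives $I_i=I_i\,\beta^\top e+\tfrac{I_{i0}}{h}\gamma^\top e$; since $I_i$ and $I_{i0}$ are linearly independent random variables this forces conditions~3 and~4, and together with condition~1 one obtains mean‑square order $1.0$ (the order‑$\tfrac32$ trees impose no further condition because $\E I_i=\E I_{i0}=0$). For $\rho(\tr)=\tfrac32$ (matched $\Prob$‑a.s.\ for order $1.5$), $[\tau_i]_0$ gives $I_{i0}=hI_i\,\alpha^\top Be+I_{i0}\,\alpha^\top De$, hence conditions~6 and~7, while $[\tau_0]_i$, after expanding $g_r(t_n+\widehat c_jh)$, gives $I_{0i}=hI_i-I_{i0}=hI_i\,\beta^\top\widehat c+I_{i0}\,\gamma^\top\widehat c$, hence conditions~8 and~9. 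For $\rho(\tr)=2$ (matched in expectation only), $[\tau_0]_0$ is deterministic and gives $\tfrac{h^2}{2}=h^2\,\alpha^\top Ae$, i.e.\ condition~5; the tree $[\tau_i,\tau_j]_0$ with $i\ne j$ is automatic since all mixed moments of independent‑noise integrals vanish, and for $i=j$ one uses $\E[I_i^2]=h$, $\E[I_iI_{i0}]=\tfrac{h^2}{2}$, $\E[I_{i0}^2]=\tfrac{h^3}{3}$ to obtain $\E\Phi([\tau_i,\tau_i]_0)=h^2\,\alpha^\top\!\big((Be)^2+(Be)\!\cdot\!(De)+\tfrac13(De)^2\big)$ against $\E I([\tau_i,\tau_i]_0)=\E\!\int_0^h W_i(s)^2\,ds=\tfrac{h^2}{2}$, i.e.\ condition~10. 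Conditions~1--10 being exactly the list produced, Theorem~\ref{thm:order} yields mean‑square order $1.5$.

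I expect the main obstacle to be the careful handling of the non‑autonomous time dependence: tracking how $c_j$ and $\widehat c_j$ propagate through the expansions, confirming that condition~1 is precisely the hypothesis under which the $\partial_tf$‑term (from $f(t_n+c_jh,Y_j)$) and the $\dot g_r$‑terms (from $g_r(t_n+\widehat c_jh)$) are produced with the correct weights, and---at order~$2$---computing the relevant second moments and making sure no surviving tree has been overlooked, in particular that the Hessian contribution $\tfrac12\sum_r g_r^\top f_{xx}g_r$ hidden inside $\mathcal L^0f$ in the It\^o--Taylor expansion of the exact solution is exactly accounted for by the trees $[\tau_i,\tau_i]_0$ (the combinatorial symmetry factors being already built into the formulation of Theorem~\ref{thm:order}). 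Once the bookkeeping is set up, the algebra for each individual tree is routine.
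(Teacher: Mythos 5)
Your proposal is correct and follows essentially the same route as the paper: both reduce the claim to Theorem~\ref{thm:order}, prune the colored trees using the additive (state-independent) structure of the noise down to the same seven trees of Table~\ref{tab:add_condition}, and read off conditions 1--10 by matching $I(\tr)$ with $\Phi(\tr)$ pathwise for $\rho(\tr)\leq 3/2$ and in expectation for $\rho(\tr)=2$ via the moments \eqref{e:e_increments}. The only quibble is that your pruning criterion ``every stochastic node is a leaf,'' taken literally, would discard $[\tau_0]_i$; the correct rule (and the one your explicit tree list and your computation $F([\tau_0]_i)=\dot g_i$ actually implement, matching the paper's) is that a stochastic node may be followed only by a terminal deterministic node $\tau_0$, which is precisely what yields conditions 8--9.
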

\begin{proof}
	The proof here is similar to \cite[Appendix D]{Roßler2010}. Because the noise terms are additive,  we find that a elementary differential vanishes if its colored rooted tree contains a node following a stochastic node directly except if the deterministic node $\tau_0$ is the only succeeding end node. Table \ref{tab:add_condition} consists of colored rooted trees whose elementary differentials are non-zero. The special one is tree 4 for which we replace the $Aeh$ by $\widehat{c}h$ in $\Phi(\tr)$.
	Now we can list the following conditions under which the SRK methods \eqref{e:srk_add} obtain mean-square order 1.5.
	\begin{table}[htpb]
		\centering
		\caption{Colored rooted trees for \eqref{e:srk_add} with order less than or equal to 2}
		\label{tab:add_condition}
		\vspace{5pt}
		\begin{tabular*}{\textwidth}{@{\extracolsep{\fill}}@{}ccccc@{}}
			\toprule
			No. & $\tr$ & $\rho(\tr)$ & $I(\tr)$   & $\Phi(\tr)$                                                                 \\ \midrule
			1   & \makecell[c]{\tone}  & 0.5       & $I_i$    & $\left(\beta^\top I_i + \gamma^\top \frac{I_{i0}}{h}\right)e$                   \\ 
			2   & \makecell[c]{\ttwo}  & 1         & $I_0$    & $h \alpha^\top e$                                                            \\ 
			3   & \makecell[c]{\tthree}  & 1.5       & $I_{i0}$ & $h\alpha^\top\left(Be I_i + De\frac{I_{i0}}{h}\right)$                        \\ 
			4   & \makecell[c]{\tfour}  & 1.5       & $I_{0i}$ & $\left(\beta^\top I_i + \gamma^\top \frac{I_{i0}}{h}\right)\widehat{c}h$        \\ 
			5   & \makecell[c]{\tfive}  & 2         & $I_{00}$ & $h^2 \alpha^\top Ae$                                                         \\ 
			6   & \makecell[c]{\tsix}  & 2         & $\int_{0}^{h}W_i(s)W_j(s) ds$       & $h\alpha^\top (Be I_i + De\frac{I_{i0}}{h}) \cdot (Be I_j + De\frac{I_{j0}}{h})$ \\ 
			7   & \makecell[c]{\tseven}  & 2         & $\int_{0}^{h}W_i(s)^2 ds$       & $h\alpha^\top \left(Be I_i + De\frac{I_{i0}}{h}\right)^2$                      \\ \bottomrule
		\end{tabular*}
	\end{table}
	
	By properties of the It\^o integral, we have these facts for the increments:
	\begin{equation}\label{e:e_increments}
	\begin{aligned}
	\E(I_i)&=\E(I_{i0})=0,\quad \E(I_i^2)=h,\quad \E(I_{i0}^2)=\frac{h^3}{3}, \\
	\E(I_i I_{i0})&=\frac{1}{2}h^2,\quad\quad\quad \E(I_i I_{j0})= 0 \quad\text{for}\quad i\neq j.
	\end{aligned}
	\end{equation}
	
	In order to attain mean-square order 1.5, we need $I(\tr) = \Phi(\tr)$ for trees 1--4 ($\rho(\tr)\leq 1.5$), and $\E\big(I(\tr)\big) = \E\big(\Phi(\tr)\big)$ for trees 5--7  ($\rho(\tr)= 2$) respectively in Table \ref{tab:add_condition} due to Theorem \ref{thm:order}.
	
	To be specific, for tree 1,
	\[
	I_i = \left(\beta^\top I_i + \gamma^\top \frac{I_{i0}}{h}\right)e \quad \Rightarrow\quad \beta^\top e=1,\quad \gamma^\top e =0. 
	\]
	For tree 2,
	\[
	I_0 = h \alpha^\top e \quad \Rightarrow\quad \alpha^\top e =1.
	\]
	For tree 3,
	\[
	I_{i0} = h\alpha^\top\left(Be I_1 + De\frac{I_{i0}}{h}\right)\quad \Rightarrow\quad \alpha^\top Be = 0, \quad \alpha^\top De =1.
	\]
	For tree 4,
	\[
	I_{0i} = \left(\beta^\top I_i + \gamma^\top \frac{I_{i0}}{h}\right)\widehat{c}h \quad \Rightarrow\quad \beta^\top \widehat{c}=1, \quad \gamma^\top \widehat{c}=-1.
	\]
	For tree 5,
	\[
	I_{00} = h^2 \alpha^\top Ae \quad \Rightarrow\quad \alpha^\top Ae = \frac{1}{2}.
	\]
	For tree 6,
	\[
	\E\left(\int_{0}^{h}W_i(s)W_j(s) ds\right) = \E\left(h\alpha^\top (Be I_i + D\frac{I_{i0}}{h}) \cdot (Be I_j + D\frac{I_{j0}}{h}\right) \, \Rightarrow\, 0 = 0.
	\]
	For tree 7,
	\begin{multline*}
	\E\left(\int_{0}^{h}W_i(s)^2 ds\right) = \frac{1}{2}h^2 = \E\left(h\alpha^\top \left(Be I_i + De\frac{I_{i0}}{h}\right)^2\right) \\
	\Rightarrow\quad \alpha^\top \left((Be)^2 + \frac{(De)^2}{3} + (Be)\cdot(De)\right) = \frac{1}{2}
	\end{multline*}
	
	Adding the usual condition $Ae = c$ used in the construction of deterministic RK methods, the conclusion follows immediately from Theorem \ref{thm:order}. 
\end{proof}

\begin{remark}
	The conditions for the first mean-square order SRK methods (conditions 1--4 in Theorem \ref{thm:add_rk}) agree with the results in \cite{Hong2015,Ma2012} when considering the additive noise case.
\end{remark}

Notice that conditions 1, 2, 5 in this theorem are the usual order conditions for normal deterministic RK methods. Therefore we can choose some deterministic RK methods as the base model and select the additional coefficients to fulfill Theorem \ref{thm:add_rk}. To meet these order conditions, the least number of stages we need is 2. For example, considering the explicit 2-stage SRK method, namely the matrix $A$ only has one non-zero coefficient $a_{21}$, then we have 9 equations with 17 coefficients to determine according to Theorem \ref{thm:add_rk}. If additionally we assume that $\widehat{c}_1=0, \widehat{c}_2=1$, then we can  get a class of methods SRK-$\alpha_1$ ($0<\alpha_1<1$ is free) with coefficients in Table \ref{tab:explicit_2s}.
\begin{table}[h]
	\centering
	\caption{Butcher tableau of 2-stage explicit SRK-$\alpha_1$ methods ($0<\alpha_1<1$ is free)}
	\label{tab:explicit_2s}
	\tabulinesep=5pt
	\vspace{5pt}
	\begin{tabu}{c|cc|cc|cc|c}
		0 & 0              & 0              & $-\sqrt{\frac{2(1-\alpha_1)}{3\alpha_1}}$ &  0 & $1+\sqrt{\frac{3(1-\alpha_1)}{2\alpha_1}}$ &  0  & 0  \\
		$\frac{1}{2-2\alpha_1}$ & $\frac{1}{2-2\alpha_1}$           & 0             &  $\sqrt{\frac{2\alpha_1}{3(1-\alpha_1)}}$ &  0 & $1+\frac{ \sqrt{6\alpha_1(1-\alpha_1)}}{2(\alpha_1-1)}$ & 0   & 1 \\ \hline
		& $\alpha_1$ & $1-\alpha_1$ & 0                    & 1 & 1 & $-1$ & 
	\end{tabu}
\end{table}

For instance, choosing $\alpha_1=\frac{1}{2}$ leads to a $2$-stage explicit SRK scheme which is of order 2 in the deterministic part (Euler-Heun), and  mean-square order 1.5 in the stochastic case. We list the coefficients of this scheme in Table \ref{tab:explicit} and call it SRK-0.5 in the sequel.

\begin{table}[h]
	\centering
	\caption{Butcher tableau of SRK-0.5 scheme ($\alpha_1=\frac{1}{2}$ in Table \ref{tab:explicit_2s})}
	\label{tab:explicit}
	\tabulinesep=5pt
	\vspace{5pt}
	\begin{tabu}{c|cc|cc|cc|c}
		0 & 0              & 0              & $-\sqrt{\frac{2}{3}}$ & 0  & $1+\sqrt{\frac{3}{2}}$ & 0   &  0 \\
		1 & 1           &  0            & $\sqrt{\frac{2}{3}}$ &  0 & $2-\frac{2+\sqrt{6}}{2}$ & 0   & 1 \\ \hline
		& $\frac{1}{2}$ & $\frac{1}{2}$ & 0                    & 1 & 1 & $-1$ & 0
	\end{tabu}
\end{table}

Meanwhile, we can acquire some other SRK methods with higher deterministic order provided we increase the number of stages, which leads to more coefficients for us to determine. We may also observe that, the form of the SRK methods \eqref{e:srk_add} we construct here demands $2m$ stochastic increments at one time step, namely $I_i$ and $I_{i0}$, $i=1,\dots,m$. The procedure of generating them and the technique of checking the corresponding mean-square order will be investigated in Section \ref{sec:num} in detail.

\section{Symplectic conditions of SRK methods for stochastic Hamiltonian systems with additive noise}\label{sec:sym_condition}
In this section we present symplectic conditions for SRK methods applied to stochastic Hamiltonian systems with additive noise, and combine the order conditions obtained in Section \ref{sec:order_add} to construct possible SSRK methods.
It is known that the standard $s$-stage symplectic RK methods for general ODEs must be implicit and have coefficients satisfying the following conditions \cite{Cooper1987,Sanz1988} (details can be found in monographs \cite{Hairer2006,Feng2010book,Sanz1994NumericalHamiltonian})
\begin{equation}\label{e:symplecticRK}
\alpha_i a_{ij} + \alpha_j a_{ji} = \alpha_i \alpha_j, \quad \text{for}\,\, i, j = 1,\dots,s.
\end{equation}

For Hamiltonian systems with additive noise, the conditions for SRK methods \eqref{e:srk_add} conserving symplecticity are just the same as that for deterministic symplectic RK methods.

\begin{theorem}\label{thm:symplectic_cond}
	The SRK methods \eqref{e:srk_add} for Hamiltonian systems with additive noise are symplectic if conditions \eqref{e:symplecticRK} are satisfied.
\end{theorem}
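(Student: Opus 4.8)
### Proof proposal

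The plan is to compute the differential two-form $dP_{n+1}\wedge dQ_{n+1}$ directly from the scheme \eqref{e:srk_add} and show it equals $dP_n\wedge dQ_n$, following the classical argument for deterministic symplectic Runge--Kutta methods (as in \cite{Hairer2006,Sanz1994NumericalHamiltonian}) and observing that the additive-noise terms contribute nothing to the computation. First I would write the scheme in Hamiltonian coordinates: with $X=(P,Q)^\top$, $f(t,X)=J^{-1}\nabla H_0(t,X)$, and $g_r(t)=J^{-1}\nabla H_r(t)$ depending only on $t$. The crucial structural observation is that, because the noise is additive, $g_r(t_n+\widehat c_j h)$ is a \emph{deterministic} (non-random-in-$X$) quantity: it does not depend on any stage value $Y_j$, hence $dg_r(t_n+\widehat c_j h)=0$ when we take the exterior derivative with respect to the initial data. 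Consequently all terms in \eqref{e:srk_add} carrying the increments $I_r$ and $I_{r0}/h$ drop out of $d(\cdot)$, and the exterior-derivative bookkeeping reduces exactly to that of the underlying deterministic RK method applied to $\dot X = f(t,X)$.

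Concretely, I would introduce $F_i := f(t_n+c_i h, Y_i)$ and split $F_i = (F_i^P, F_i^Q)^\top$ according to the $(P,Q)$ block structure; note $F_i^P = -\pa H_0/\pa q(t_n+c_i h, Y_i)$ and $F_i^Q = \pa H_0/\pa p(t_n+c_i h, Y_i)$. Taking exterior derivatives of the update equations in \eqref{e:srk_add} gives
\begin{equation*}
dP_{n+1} = dP_n + h\sum_{i=1}^s \alpha_i\, dF_i^P, \qquad
dQ_{n+1} = dQ_n + h\sum_{i=1}^s \alpha_i\, dF_i^Q,
\end{equation*}
the noise terms having vanished as explained. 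Wedging these and expanding,
\begin{equation*}
dP_{n+1}\wedge dQ_{n+1} = dP_n\wedge dQ_n
+ h\sum_i \alpha_i\big(dF_i^P\wedge dQ_n + dP_n\wedge dF_i^Q\big)
+ h^2\sum_{i,j}\alpha_i\alpha_j\, dF_i^P\wedge dF_j^Q.
\end{equation*}
Similarly, from the stage equations, $dY_i = (dY_i^P,dY_i^Q)^\top$ with $dY_i^P = dP_n + h\sum_j a_{ij}dF_j^P$ and $dY_i^Q = dQ_n + h\sum_j a_{ij}dF_j^Q$ (again the noise terms drop). Since $dP_n\wedge dQ_n = dY_i^P\wedge dY_i^Q$ would be the symplecticity of each stage, the standard manipulation is to express $dP_n\wedge dF_i^Q + dF_i^P\wedge dQ_n$ through $dY_i^P\wedge dF_i^Q + dF_i^P\wedge dY_i^Q$ and use that the latter vanishes because $(dF_i^P, dF_i^Q)$ is the differential of a Hamiltonian vector field evaluated at $Y_i$, i.e. $dF_i^P\wedge dY_i^Q + dY_i^P\wedge dF_i^Q = 0$ (this is the one-form version of the fact that $\pa F_i/\pa Y_i$ is Hamiltonian, using the symmetry of the Hessian of $H_0$). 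Substituting the stage relations and collecting terms, the $O(h)$ contribution becomes $h\sum_{i,j}(\alpha_i\alpha_j - \alpha_i a_{ij} - \alpha_j a_{ji})\, dF_i^P\wedge dF_j^Q$, which is killed precisely by the condition \eqref{e:symplecticRK}, and this exactly cancels the $O(h^2)$ term as well, leaving $dP_{n+1}\wedge dQ_{n+1} = dP_n\wedge dQ_n$.

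The main obstacle — really the only subtle point — is making rigorous the claim that the exterior derivatives of the noise terms vanish and that the remaining exterior-calculus identity is purely algebraic in the coefficients, independent of the (random) values of $I_r, I_{r0}$. I would make this precise by noting that the flow map $(p_0,q_0)\mapsto(P_n,Q_n)$ is, for each fixed realization of the Wiener increments, a smooth map, and that the increments enter \eqref{e:srk_add} only as coefficients multiplying the $X$-independent functions $g_r(t_n+\widehat c_j h)$; hence $d$ (taken in the $(p_0,q_0)$ variables) annihilates them, and the identity $dP_{n+1}\wedge dQ_{n+1}=dP_n\wedge dQ_n$ holds pathwise. One should also check that the stage equations are (locally) solvable so that the $Y_i$, and hence the flow, are well-defined and smooth in the initial data — this follows from the implicit function theorem for $h$ small, exactly as in the deterministic case. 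With these points in place the computation is the classical one, so I would present the noise-vanishing observation carefully and then either reproduce the deterministic wedge-product computation or cite \cite{Sanz1994NumericalHamiltonian,Hairer2006} for its conclusion.
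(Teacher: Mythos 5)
Your proposal is correct and follows essentially the same route as the paper: both reduce the problem to the deterministic symplectic Runge--Kutta computation by observing that, for additive noise, the terms involving $g_r$ are independent of the stage values and hence vanish under the exterior derivative in the initial data. The only difference is that you carry out the classical wedge-product cancellation explicitly (and note stage solvability), whereas the paper simply cites \cite{Milstein2002add} for that calculation.
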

\begin{proof}
	Owing to the additive noise in our model, we can rewrite the SRK method \eqref{e:srk_add} as
	\begin{equation}\label{e:srk_add_proof}
		\left\{
		\begin{aligned}
		P_i = p_n + h\sum_{j=1}^{s} a_{ij} f(t_n+c_j h, P_j, Q_j) + \xi_i,\\
		Q_i = q_n + h\sum_{j=1}^{s} a_{ij} f(t_n+c_j h, P_j, Q_j) + \eta_i,\\
		p_{n+1} = p_n + h\sum_{i=1}^{s} \alpha_{i} f(t_n+c_i h, P_i, Q_i) + \phi,\\
		q_{n+1} = q_n + h\sum_{i=1}^{s} \alpha_{i} f(t_n+c_i h, P_i, Q_i) + \psi,
		\end{aligned}\right.
	\end{equation}
	where $\xi_i, \eta_i, \phi, \psi$ are random variables which are independent of $P_i$ and $Q_i$ at each step. Thus we are able to attain the symplectic condition $dp_{n+1}\wedge dq_{n+1} = dp_n\wedge dq_n$ for \eqref{e:srk_add_proof} if \eqref{e:symplecticRK} are satisfied by the simple calculation of wedge product \cite{Milstein2002add}.
	
\end{proof}

\begin{remark}
	Symplectic conditions for Hamiltonian systems with multiplicative noise can be found in \cite{Ma2012,Hong2015}. However, more conditions similar to \eqref{e:symplecticRK} must be added.
\end{remark}

Now we construct SSRK methods of mean-square order 1.5 for Hamiltonian systems with additive noise. Here we just consider the case of lower stage SRK methods. As before the 1-stage method is not suitable here. Therefore, we select the family of 2-stage order 2.0 symplectic RK methods in the deterministic case \cite{Hairer2006} with coefficients in Table \ref{tab:symRK}, where the parameter $\alpha_1\in (0,1)$ is free to choose.

\begin{table}[h]
	\centering
	\caption{Butcher tableau of the 2-stage order 2.0 diagonal implicit symplectic RK methods}
	\label{tab:symRK}
	\tabulinesep=5pt
	\vspace{5pt}
	\begin{tabu}{c|cc}
		$\frac{\alpha_1}{2}$ & $\frac{\alpha_1}{2}$ & 0  \\
		$\frac{\alpha_1 + 1}{2}$& $\alpha_1$ & $\frac{1 - \alpha_1}{2}$ \\ \hline
		& $\alpha_1$ & $1 - \alpha_1$
	\end{tabu}
\end{table}

Concerning the conditions in Theorem \ref{thm:add_rk}, we gain the conditions below:
\begin{align*}
	\beta^\top e = 1 \quad &\Rightarrow\quad \beta_1 +\beta_2 = 1,\\
	\gamma^\top e = 0 \quad &\Rightarrow\quad \gamma_1 + \gamma_2 = 0,\\
	\beta^\top \widehat{c} = 1 \quad & \Rightarrow\quad \beta_1 \widehat{c}_1 + \beta_2\widehat{c}_2 = 1,\\
	\gamma^\top \widehat{c} = -1 \quad & \Rightarrow\quad \gamma_1 \widehat{c}_1 + \gamma_2\widehat{c}_2 = -1,\\
	\alpha^\top Be = 0 \quad & \Rightarrow\quad \alpha_1 b_1 + (1-\alpha_1) b_2 = 0,\\
	\alpha^\top De = 1 \quad & \Rightarrow\quad \alpha_1 d_1 + (1-\alpha_1) d_2 = 1,
\end{align*}
\begin{multline*}
	\alpha^\top \left((Be)^2 + \frac{(De)^2}{3} + (Be)\cdot(De)\right) = \frac{1}{2} \quad  \Rightarrow\quad \\ \alpha_1\left(b_1^2 + \frac{d_1^2}{3} + b_1 d_1\right) + (1-\alpha_1)\left(b_2^2 + \frac{d_2^2}{3} + b_2 d_2\right) = \frac{1}{2},
\end{multline*}
where we denote $Be=(b_1, b_2)^\top$, and $De=(d_1, d_2)^\top$.

Let $\widehat{c}_1=0$, $\widehat{c}_2=1$. We thus obtain a one-parameter family SSRK-$\alpha_1$ and a two-parameter family SSRK-$\alpha_1$-$b_1$ listed in Table \ref{tab:SSRKa} and \ref{tab:SSRKab} respectively, which are 2-stage diagonal implicit SSRK methods with mean-square order 1.5. For example, if we choose $\alpha_1=\frac{1}{2}$ in Table \ref{tab:SSRKa}, then we get the SSRK-0.5 in Table \ref{tab:SSRK-0.5}, which is similar to the scheme proposed in \cite{Milstein2002add} but totally derivative-free here.
\begin{table}[h]
	\centering
	\caption{Butcher tableau of the 2-stage diagonal implicit SSRK$-\alpha_1$ method ($0<\alpha_1<1$ is free)}
	\label{tab:SSRKa}
	\tabulinesep=5pt
	\vspace{5pt}
	\begin{tabu}{c|cc|cc|cc|c}
		$\frac{\alpha_1}{2}$ & $\frac{\alpha_1}{2}$              & 0              & $-\sqrt{\frac{2(1-\alpha_1)}{3\alpha_1}}$ &  0 & $1+\sqrt{\frac{3(1-\alpha_1)}{2\alpha_1}}$ &  0  & 0  \\
		$\frac{\alpha_1+1}{2}$ & $\alpha_1$           & $\frac{1-\alpha_1}{2}$             &  $\sqrt{\frac{2\alpha_1}{3(1-\alpha_1)}}$ &  0 & $1+\frac{ \sqrt{6\alpha_1(1-\alpha_1)}}{2(\alpha_1-1)}$ & 0   & 1 \\ \hline
		& $\alpha_1$ & $1-\alpha_1$ & 0                    & 1 & 1 & $-1$ & 
	\end{tabu}
\end{table}

\begin{table}[h]
	\centering
	\caption{Butcher tableau of the 2-stage diagonal implicit SSRK-$\alpha_1$-$b_1$ method ($0<\alpha_1<1$ and $-\frac{2}{3}\sqrt{\frac{1-\alpha_1}{\alpha_1}}<b_1<\frac{2}{3}\sqrt{\frac{1-\alpha_1}{\alpha_1}}$ are free parameters)}
	\label{tab:SSRKab}
	\tabulinesep=5pt
	\vspace{5pt}
	\begin{tabu}{c|cc|cc|cc|c}
		$\frac{\alpha_1}{2}$  & $\frac{\alpha_1}{2}$ &           0            & $b_1$ & 0 &       $1-\frac{3b_1}{2}-\frac{1}{2}\sqrt{\frac{2}{\alpha_1}-3b_1^2-2}$        & 0  & 0 \\
		$\frac{\alpha_1+1}{2}$ &      $\alpha_1$      & $\frac{1-\alpha_1}{2}$ & $\frac{\alpha_1 b_1}{\alpha_1-1}$  & 0 & $1- \frac{ 3b_1\alpha_1+\alpha_1\sqrt{\frac{2}{\alpha_1}-3b_1^2-2}}{2(\alpha_1-1)}$ & 0  & 1 \\ \hline
		&      $\alpha_1$      &      $1-\alpha_1$      &                     0                     & 1 &                            1                            & $-1$ &
	\end{tabu}
\end{table}

\begin{table}[h]
	\centering
	\caption{Butcher tableau of the SSRK-0.5 scheme ($\alpha_1 = \frac{1}{2}$ in Table \ref{tab:SSRKa})}
	\label{tab:SSRK-0.5}
	\tabulinesep=5pt
	\vspace{5pt}
	\begin{tabu}{c|cc|cc|cc|c}
		$\frac{1}{4}$ & $\frac{1}{4}$ &       0       & $-\sqrt{\frac{2}{3}}$ & 0 &  $1+\sqrt{\frac{3}{2}}$  & 0  & 0 \\
		$\frac{3}{4}$ & $\frac{1}{2}$ & $\frac{1}{4}$ & $\sqrt{\frac{2}{3}}$  & 0 & $1-\sqrt{\frac{3}{2}}$ & 0  & 1 \\ \hline
		& $\frac{1}{2}$ & $\frac{1}{2}$ &           0           & 1 &            1             & $-1$ &
	\end{tabu}
\end{table}

\section{Second-order Hamiltonian systems with additive noise}\label{sec:2order_add}
In this section, we focus on the special second-order Hamiltonian systems with additive noise of the following form \cite{Milstein2002add}:
\begin{equation}\label{e:2order1}
	M\ddot{X} + \nabla U(t, X) = \sum_{r=1}^{m}\sigma_r (t) \dot{W}_r,
\end{equation}
where $M$ is a symmetric and invertible $n\times n$ constant matrix, $U$ is a differentiable function. These systems arise in a wide range of fields such as classical mechanics, molecular dynamics, biology and quantum mechanics when considering the random force effect \cite{Gitterman2005}.  Let
\begin{equation}
	Q(t) = X(t), \quad P(t) = M\dot{Q}(t),
\end{equation}
then \eqref{e:2order1} can be regarded as a $2n$-dimensional Hamiltonian system with additive noise:
\begin{equation}\label{e:order2_hamilton}
	\left\{
	\begin{aligned}
		dP &= -\nabla U(t,Q) + \sum_{r=1}^{m} \sigma_r(t) dW_r(t),\\
	dQ &= M^{-1}P dt.
	\end{aligned}\right.
\end{equation}
Thus, the corresponding Hamiltonian functions of \eqref{e:order2_hamilton} are
\begin{equation}
	H_0(t,p,q) = \frac{1}{2} p^\top M^{-1}p + U(t, q),
\end{equation}
\begin{equation}
	H_r(t,p,q) = - \sum_{r=1}^{m} \sigma_r(t) q, \quad r=1,\dots, m
\end{equation}
Obviously this is a special example of Hamiltonian systems with additive noise, and the mean-square order conditions of the foregoing SSRK methods are also available here. However, according to the specific features of \eqref{e:order2_hamilton}, we are able to simplify some conditions and get higher mean-square order without more effort.

\begin{theorem}\label{thm:2order_add}
	Let $\nabla U\in C^{1,3}([0,T]\times \R^d, \R^d)$ and $\sigma_j\in C^{1}([0,T], \R)$ for $j=1,\dots,m$. If the SRK methods \eqref{e:srk_add} for the second-order Hamiltonian systems \eqref{e:order2_hamilton} possess coefficients satisfying
	\begin{enumerate}
		\item $c=Ae$,
		\item $\alpha^\top e = 1$,
		\item $\beta^\top e=1$,
		\item $\gamma^\top e=0$,
	\end{enumerate}
	then they are of mean-square order 1.0. If in addition conditions
	\begin{enumerate}
		\setcounter{enumi}{4}
		\item $\alpha^\top Ae=\frac{1}{2}$,
		\item $\alpha^\top Be=0$,
		\item $\alpha^\top De=1$,
		\item $\beta^\top \widehat{c}=1$,
		\item $\gamma^\top \widehat{c}=-1$,
	\end{enumerate}
	are satisfied, then they have mean-square order 2.0.
\end{theorem}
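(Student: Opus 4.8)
The plan is to run the colored--tree argument from the proof of Theorem \ref{thm:add_rk} (and \cite[Appendix D]{Roßler2010}) but to exploit the rigid block structure of \eqref{e:order2_hamilton} to eliminate the very trees that obstructed order $2.0$ there. Writing $X=(P,Q)^\top$, system \eqref{e:order2_hamilton} is the additive-noise SDE \eqref{e:sdeAdd} with $f(t,P,Q)=\big(-\nabla U(t,Q),\,M^{-1}P\big)^\top$ and $g_r(t)=\big(\sigma_r(t),0\big)^\top$. First I would isolate the two structural facts that drive everything: (a) the $P$-block of $f$ does not depend on $p$ and the $Q$-block of $f$ is affine in $p$, so every second and higher partial derivative of $f$ in the $p$-variables vanishes identically; (b) each $g_r$ is state-independent and supported in the $P$-block. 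The mean-square order $1.0$ part is then literally the one in Theorem \ref{thm:add_rk}: by Theorem \ref{thm:order} with $p=1$ one needs trees $1$ and $2$ of Table \ref{tab:add_condition} to match pathwise (giving conditions 2, 3, 4) and trees $3$ and $4$ to match in expectation, which is automatic because $\E(I_{i0})=\E(I_{0i})=0$ and the corresponding $\Phi(\tr)$ are linear in the centred increments $I_i,I_{i0}$; together with the consistency relation $c=Ae$ this is exactly conditions 1--4.

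For order $2.0$ the crucial observation is that trees $6$ and $7$ of Table \ref{tab:add_condition}, whose elementary differential is $f^{(2)}(g_i,g_j)$, \emph{vanish identically} for \eqref{e:order2_hamilton}: by (b) the only indices that can be contracted against $g_i,g_j$ lie in the $P$-block, and by (a) the Hessian of $f$ restricted to those indices is zero. Hence these trees contribute nothing to the B-series of either the exact or the numerical solution, and the obstructive order-$2$ condition 10 of Theorem \ref{thm:add_rk} --- which cannot hold pathwise in general, since $\int_0^h W_i(s)^2\,ds$ is not a polynomial in $I_i,I_{i0}$ --- is no longer needed. By Theorem \ref{thm:order} with $p=2$ it then remains to verify (i) $I(\tr)=\Phi(\tr)$ pathwise for every tree with $\rho(\tr)\le 2$ and non-vanishing elementary differential, and (ii) $\E(I(\tr))=\E(\Phi(\tr))$ for every tree with $\rho(\tr)=\tfrac52$. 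For (i), once trees $6,7$ are deleted the trees with $\rho(\tr)\le 2$ and non-trivial elementary differential are precisely trees $1$--$5$, and reading off the pathwise identities as in the proof of Theorem \ref{thm:add_rk} produces exactly conditions 1--9.

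Point (ii) is the only step that needs a fresh argument, and it holds unconditionally by a parity count: any $\tr$ with $\rho(\tr)=\tfrac52$ has an odd number $s(\tr)\in\{1,3,5\}$ of stochastic nodes, so $I(\tr)$ is an odd-degree functional of the Wiener increments and hence $\E(I(\tr))=0$, while by the definition of $\Phi$ each stochastic node contributes one factor linear in $(I_i,I_{i0})$ and each deterministic node a deterministic factor, so $\Phi(\tr)$ is a sum of monomials of degree exactly $s(\tr)$ in the jointly Gaussian centred variables $\{I_i,I_{i0}\}$; an odd-degree monomial in centred jointly Gaussian variables has zero expectation, so $\E(\Phi(\tr))=0$ too and no further condition arises. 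Invoking Theorem \ref{thm:order} with $p=2$ --- the elementary differentials that survive involve at most one spatial derivative of $f$ and one time derivative of each $g_r$, matching the stated regularity $\nabla U\in C^{1,3}$, $\sigma_j\in C^1$ --- then gives mean-square order $2.0$. The hard part will be the bookkeeping in step (i): making sure that Table \ref{tab:add_condition} together with the vanishing of trees $6$ and $7$ genuinely exhausts all colored rooted trees of order $\le 2$ with non-trivial elementary differential for this system, and that, as the parity argument shows, no order-$\tfrac52$ tree secretly imposes a non-automatic constraint.
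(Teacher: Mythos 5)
Your proposal is correct and follows essentially the same route as the paper: the decisive step in both is that the block structure of \eqref{e:order2_hamilton} makes the elementary differentials of trees 6 and 7 in Table \ref{tab:add_condition} vanish (second derivatives of $f$ contracted against the noise directions are zero), so the unattainable pathwise condition 10 of Theorem \ref{thm:add_rk} disappears and trees 1--5 yield exactly conditions 1--9 via Theorem \ref{thm:order} with $p=2$. The only divergence is at order $\rho(\tr)=\tfrac{5}{2}$, where the paper verifies the three surviving trees of Table \ref{tab:add_order2_condition} individually while you dispose of all such trees at once by the parity argument (an odd number of stochastic nodes makes both $I(\tr)$ and $\Phi(\tr)$ centered); this is a slightly more systematic bookkeeping of the same proof, not a different one.
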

\begin{proof}
	We find that  conditions of the theorem above are similar to Theorem \ref{thm:add_rk} except that condition 10 in Theorem \ref{thm:add_rk} is unnecessary here. In addition, because of the special structure of system \eqref{e:order2_hamilton}, we obtain mean-square order 2.0 here which is higher than that of Theorem \ref{thm:add_rk}. In order to reach mean-square order 2.0, we need to analyze three additional colored rooted trees ($\rho(\tr)=2.5$) listed in Table \ref{tab:add_order2_condition}.
	\begin{table}[h]
		\centering
		\caption{Colored rooted tree for \eqref{e:srk_add} with order equal to 2.5}
		\label{tab:add_order2_condition}
		\vspace{5pt}
		\begin{tabular*}{\textwidth}{@{\extracolsep{\fill}}@{}ccccc@{}}
			\toprule
			No. & $\tr$ & $\rho(\tr)$ & $I(\tr)$   & $\Phi(\tr)$                                                                 \\ \midrule
			8   & \makecell[c]{\teight}  & 2.5       &  $I_{i00}$   & $h\alpha^\top (hA) \left(Be I_1 + De\frac{I_{i0}}{h}\right)$                  \\ 
			9   & \makecell[c]{\tnine}  & 2.5         &  $\int_{0}^{h}s\cdot W_s ds$   & $h \alpha^\top (hAe)\cdot\left(Be I_1 + De\frac{I_{i0}}{h}\right) $                                                            \\ 
			10   & \makecell[c]{\tten}  & 2.5       & $I_{0i0}$ & $h\alpha^\top\left(B I_1 + D\frac{I_{i0}}{h}\right)(h\widehat{c})$\\
			\bottomrule
		\end{tabular*}
	\end{table}
	
	The condition 10 in Theorem \ref{thm:add_rk} is derived from the tree 7 in Table \ref{tab:add_condition}, which is essential for general system \eqref{e:srk_add}. However, for system \ref{e:order2_hamilton}, the corresponding elemental differential of tree 7 is
	\begin{equation}
		F(\tr_7)(x) = f^{(2)}(x)\Big(F(\tau_i)(x),F(\tau_i)(x)\Big) = 0,
	\end{equation}
	because the $n$th component of it is
	\begin{equation}
		\begin{aligned}
			\Big(F(\tr_7)(x)\Big)_n &=  \sum_{j_1,j_2 =1}^{d} \frac{\pa^2 (f(x)_n)}{\pa x^{j_1} \pa x^{j_2}} \Big(\big(g_i(x)\big)_{j_1}, \big(g_i(x)\big)_{j_2}\Big)\\
			&= \left\{
			\begin{aligned}
				& \sum_{j_1,j_2 =1}^{d} \frac{\pa^2 \big(\nabla U(q)\big)_n}{\pa q^{j_1} \pa q^{j_2}}\big(0, 0\big) & \text{for} & & n &= 1,\dots ,\frac{d}{2},\\
				&\sum_{j_1,j_2 =1}^{d} \frac{\pa^2 \big(M^{-1}p\big)_n}{\pa p^{j_1} \pa p^{j_2}}\big(\sigma_{j_1}, \sigma_{j_2}\big) & \text{for} & & n &= \frac{d}{2}+1,\dots ,d,
			\end{aligned}\right.\\
			&= 0,
		\end{aligned}
	\end{equation}
	where $\big(f(x)\big)_n$ means the $n$th element of the vector function $f(x)$.
	Thus tree 7 is unnecessary in this situation. With the same analysis, the elementary differential of tree 6 in Table \ref{tab:add_condition} vanishes as well. Moreover, for the additional trees 8, 9, 10 in Table \ref{tab:add_order2_condition}, we can  check that
	\begin{equation}
		\E\big(I(\tr)\big) = \E\big(\Phi(\tr)\big).
	\end{equation}
	In short, we have 
	\begin{equation}
		\begin{aligned}
			I(\tr) &= \Phi(\tr) && \text{for} & \rho(\tr) &\leq 2,\\
			\E\big(I(\tr)\big) &= \E\big(\Phi(\tr)\big) && \text{for} & \rho(\tr) &= 2.5.
		\end{aligned}
	\end{equation}
	Then applying Theorem \ref{thm:order} completes the proof. 
\end{proof}

\begin{remark}
	If the $dQ$ term in \eqref{e:order2_hamilton} contains additive noise as well, we can also get similar conditions to obtain mean-square order 2.0 SRK methods, but in this case the condition 10 in Theorem \ref{thm:add_rk} must be added.
\end{remark}

\begin{theorem}
	The SSRK-$\alpha_1$ and SSRK-$\alpha_1$-$b_1$ methods with coefficients for system \eqref{e:order2_hamilton} listed in Table \ref{tab:SSRKa} and \ref{tab:SSRKab} respectively are symplectic and of mean-square order 2.0.
\end{theorem}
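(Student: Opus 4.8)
The plan is to split the statement into its two assertions — symplecticity and mean-square order $2.0$ — and dispatch each by the appropriate structural theorem already proved above, so that what remains is only the verification of a short list of algebraic identities for the tableau entries in Tables \ref{tab:SSRKa} and \ref{tab:SSRKab}.

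\emph{Symplecticity.} First I would observe that the deterministic block $(A,\alpha)$ of both families is exactly the $2$-stage diagonally implicit symplectic pair of Table \ref{tab:symRK}, i.e. $\alpha=(\alpha_1,1-\alpha_1)^\top$ with $a_{11}=\alpha_1/2$, $a_{12}=0$, $a_{21}=\alpha_1$, $a_{22}=(1-\alpha_1)/2$. It then suffices to check the three instances of \eqref{e:symplecticRK}: the cases $i=j=1$, $i=1,j=2$, $i=j=2$ reduce to the one-line identities $\alpha_1\cdot\tfrac{\alpha_1}{2}+\alpha_1\cdot\tfrac{\alpha_1}{2}=\alpha_1^2$, $\alpha_1\cdot 0+(1-\alpha_1)\alpha_1=\alpha_1(1-\alpha_1)$, and $2(1-\alpha_1)\cdot\tfrac{1-\alpha_1}{2}=(1-\alpha_1)^2$. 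Since in \eqref{e:srk_add} the additive-noise contributions enter only through increments that are independent of the internal stages, Theorem \ref{thm:symplectic_cond} applies verbatim and gives $dP_{n+1}\wedge dQ_{n+1}=dP_n\wedge dQ_n$.

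\emph{Order.} The key remark is that, by the very construction in Section \ref{sec:sym_condition}, the coefficients of Tables \ref{tab:SSRKa} and \ref{tab:SSRKab} (with $\widehat c_1=0$, $\widehat c_2=1$) were chosen precisely to satisfy conditions $1$--$10$ of Theorem \ref{thm:add_rk}. Conditions $1$--$9$ of Theorem \ref{thm:2order_add} are literally conditions $1$--$9$ of Theorem \ref{thm:add_rk}, hence hold automatically; concretely one verifies $c=Ae=(\alpha_1/2,(\alpha_1+1)/2)^\top$, $\alpha^\top e=1$, $\alpha^\top Ae=\tfrac12$, $\beta^\top e=1$, $\gamma^\top e=0$, $\beta^\top\widehat c=1$, $\gamma^\top\widehat c=-1$, together with the two radical identities $\alpha^\top Be=0$ and $\alpha^\top De=1$ — for Table \ref{tab:SSRKa} these follow after simplification from cancellations such as $-\sqrt{2\alpha_1(1-\alpha_1)/3}+\sqrt{2\alpha_1(1-\alpha_1)/3}=0$ and $\tfrac{\sqrt6}{2}\sqrt{\alpha_1(1-\alpha_1)}-\tfrac{\sqrt6}{2}\sqrt{\alpha_1(1-\alpha_1)}=0$, and the computation for Table \ref{tab:SSRKab} with the free parameter $b_1$ is analogous. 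Because system \eqref{e:order2_hamilton} is of the special second-order form, Theorem \ref{thm:2order_add} then delivers mean-square order $2.0$ (condition $10$ being unnecessary there).

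The only points requiring care — more bookkeeping than obstacle — are: (i) checking that the radicands in Tables \ref{tab:SSRKa} and \ref{tab:SSRKab} are nonnegative over the stated parameter ranges $0<\alpha_1<1$ and $-\tfrac23\sqrt{(1-\alpha_1)/\alpha_1}<b_1<\tfrac23\sqrt{(1-\alpha_1)/\alpha_1}$, so that the schemes are real and well defined; and (ii) confirming that \eqref{e:order2_hamilton} genuinely fits the template \eqref{e:sdeAdd}–\eqref{e:srk_add}, i.e. the noise is additive, affects only the $dP$-component, and has $\sigma_r$ depending on $t$ alone — this is exactly what forces the elementary differentials of trees $6$ and $7$ of Table \ref{tab:add_condition} to vanish, as used in the proof of Theorem \ref{thm:2order_add}. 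None of this goes beyond the computations sketched above, so I expect the full argument to be brief.
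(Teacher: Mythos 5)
Your proposal is correct and follows exactly the paper's route: the paper's own proof is a one-line appeal to Theorem \ref{thm:symplectic_cond} (the deterministic block is the symplectic pair of Table \ref{tab:symRK}) and to Theorem \ref{thm:2order_add} (conditions 1--9, which the tableaux were built to satisfy). Your version merely spells out the algebraic verifications that the paper calls ``clear,'' and those verifications check out.
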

\begin{proof}
	The coefficients of SSRK-$\alpha_1$ and SSRK-$\alpha_1$-$b_1$ in Table \ref{tab:SSRKa} and \ref{tab:SSRKab} satisfy Theorem \ref{thm:symplectic_cond} and \ref{thm:2order_add} above clearly; thus it is symplectic and of mean-square order 2.0. 
\end{proof}

\section{Numerical experiments}\label{sec:num}

In this section, we perform numerical tests to verify the mean-square convergence  order and geometric superiority of our numerical schemes proposed in Section \ref{sec:order_add} and \ref{sec:sym_condition}. To reach high mean-square order, the schemes must contain some more stochastic increments besides $I_i = \Delta W_i$ in the Euler-Maruyama scheme \cite{Kloeden1992} at each step, which are actually the multiple stochastic integrals appearing in the It\^o-Taylor expansion (cf. \cite{Kloeden1992}). Generally there is no simple way to simulate these multiple It\^o integrals exactly and effectively \cite{Wiktorsson2001}. Nevertheless, owing to the special structure of SDEs we consider here, the Hamiltonian systems with additive noise, we observe that the schemes we derive here will be just in need of another stochastic increment $I_{i0} = \int_{t_n}^{t_{n+1}} \int_{t_n}^{s} dW_i(s) \,ds$ for each Wiener process $W_i$ on every step.

By using the fact that $I_i$ and $I_{i0}$ are two centered Gaussian random variables, they can be simulated by two independent standard Gaussian random variables $U_1$ and $U_2$ \cite{Milstein2004}. Specifically,
\begin{equation}
I_i := \sqrt{h} U_1,\quad I_{i0} := \frac{1}{2}h^{3/2}\left(U_1 + \frac{U_2}{\sqrt{3}}\right),
\end{equation}
which imply that we demand $2m$ independent Gaussian random variables at each time step.
To check the mean-square order of convergence for our schemes, we may test the mean-square errors at the terminal time $T$ according to different time step-size $h$. It might also be noted that with the change of $h$, the stochastic increments we use in the numerical schemes have to be in the same sample path respectively \cite{Higham2001}. For technical details please see e.g. \cite{Burrage1999runge}.


%

\subsection{Stochastic harmonic oscillator with additive noise}
Here we consider the stochastic harmonic oscillator with scalar additive noise given by:
\begin{equation}\label{e:linear_oscillator}
\left\{
\begin{aligned}
dP &= -Qdt + \sigma dW(t), & P(0) = p_0, \\
dQ &= \phantom{-}Pdt, & Q(0) = q_0,
\end{aligned}\right.
\end{equation}
where $Q$ is the position and $P$ is the velocity of a particle under the simple harmonic restoring force and a random white noise force with intensity $\sigma$ \cite{Milstein2004}. Also, system \eqref{e:linear_oscillator} is a simple example of system \eqref{e:order2_hamilton} in autonomous case (drift and diffusion term are independent of t), where $M=1$ and $U(t, q) = \frac{1}{2}q^2$. Thus, the Hamiltonians of \eqref{e:linear_oscillator} are $H_0(p,q) = \frac{1}{2}(p^2 + q^2)$ and $H_1(p,q) = -\sigma q$. We also note that
\begin{equation}\label{e:linear_growth}
\E\Big(H_0\big(P(t),Q(t)\big)\Big) = \frac{1}{2}(p_0^2+q_0^2) + \frac{1}{2}\sigma^2 t,
\end{equation}
simply by using the It\^{o}'s formula, which is a significant geometric property of \eqref{e:linear_oscillator} and can be found in e.g. \cite{Burrage2012,Hong2007,Melbø2004} for detail. This implies that the expectation of $H_0$ along the exact solution of \eqref{e:linear_oscillator} (the second moment of the exact solution in this case) has linear growth. Recently,  \cite{Senosiain2014} gives a review on numerical schemes for solving this kind of linear stochastic oscillator, but it does not contain high mean-square order methods like SSRK-$\alpha_1$ in this paper.

\begin{table}[htbp]
	\centering
	\caption{Mean-square errors for \eqref{e:linear_oscillator} with different schemes} 
	\label{tab:linear_osc}
	\begin{tabular*}{1\textwidth}{@{\extracolsep{\fill}}@{}lcccccc@{}}
		\toprule
		$h$   & $2^{-1}$ & $2^{-2}$ & $2^{-3}$ & $2^{-4}$ & $2^{-5}$ & order \\ \midrule
		Euler & 3.71E-01 & 1.82E-01 & 8.72E-02 & 4.09E-02 & 1.82E-02 & 1.09  \\
		SRK-0.5  & 1.99E-01 & 4.82E-02 & 1.19E-02 & 3.00E-03 & 7.00E-04 & 2.03  \\
		SSRK-0.5  & 8.69E-02 & 2.11E-02 & 5.30E-03 & 1.30E-03 & 3.00E-04 & 2.04  \\ \bottomrule
	\end{tabular*}
\end{table}

Firstly, we check the mean-square convergence of SSRK-0.5 scheme in Section \ref{sec:sym_condition}. For comparison, the Euler-Maruyama and mean-square order 1.5 SRK-0.5 scheme proposed in Section \ref{sec:order_add} are also presented in this part. 
We simulate them at terminal time $T=1$, with $\sigma=1$, $(p_0, q_0)=(1,0)$ in system \eqref{e:linear_oscillator}. To avoid applying more random variables in simulating the exact solution, we just use order 2.0 strong Taylor type scheme \cite{Kloeden1992} with $h=2^{-14}$ as the reference solution $X_{T}^{ref}$, and the corresponding mean-square errors are computed as:
\begin{equation}
\frac{1}{M} \sum_{i=1}^{M} \Big|X_{N}(\omega_i)-X_{T}^{ref}(\omega_i)\Big|^2
\end{equation}
where $M=3000$ denotes the number of sample paths we simulate.
Table \ref{tab:linear_osc} shows the mean-square errors of Euler-Maruyama, SRK and SSRK methods, where the last column lists the convergence order calculated by method of the least square fitting \cite{Higham2001}. Moreover, Figure \ref{f:linearosc_order} shows them graphically. There are three dashed lines as references which have slopes 1.0, 1.5 and 2.0, respectively in order to demonstrate the mean-square convergence order for these methods. So the mean-square orders for SRK-0.5 and SSRK-0.5 are both 2.0. It is consistent with theoretical analysis in Theorem \ref{thm:2order_add}.

\begin{figure}[htb]
	\centering
	\includegraphics[width=1\textwidth]{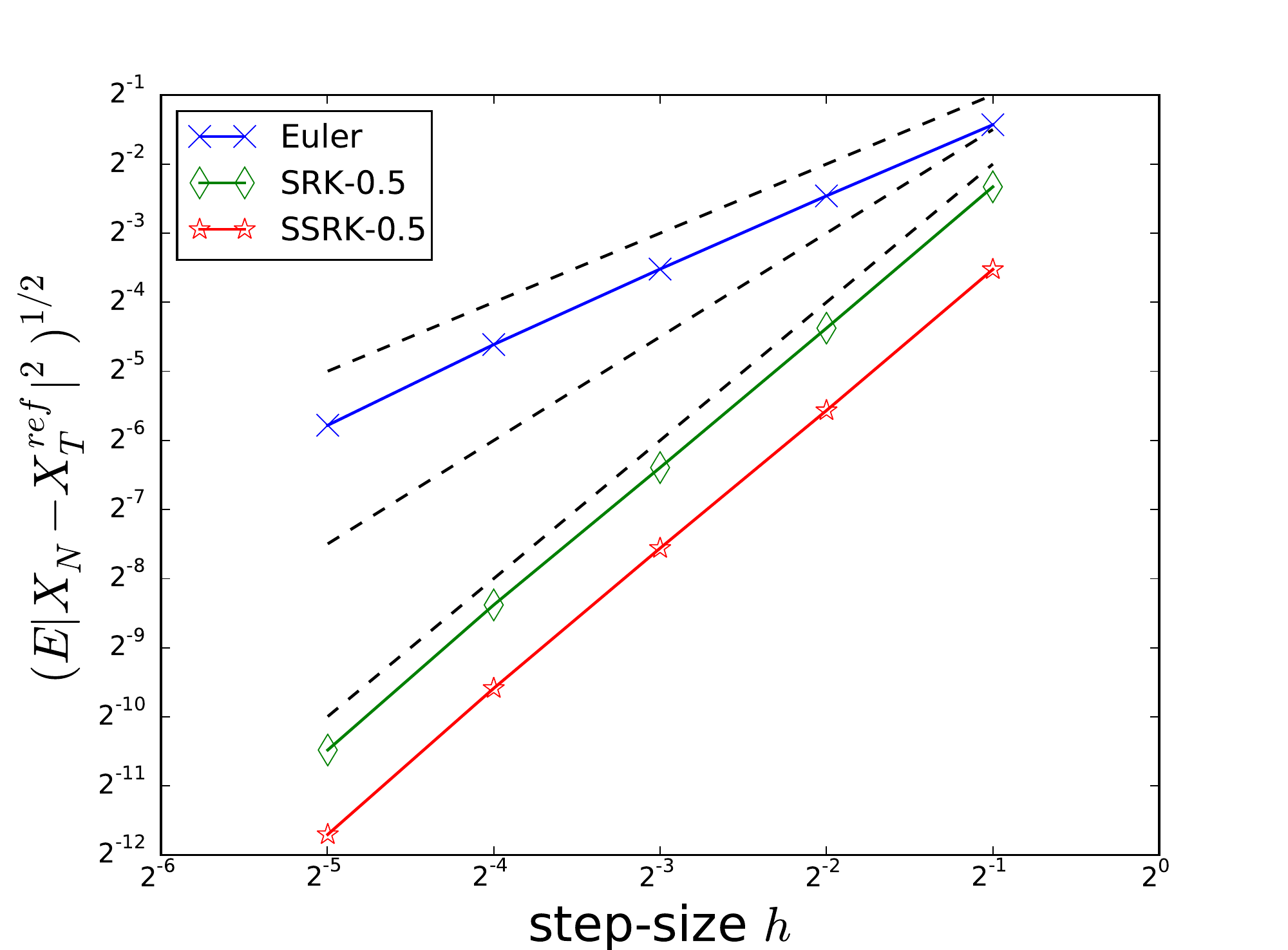}
	\caption{Mean-square errors at $T=1$ of  three methods for linear oscillator \eqref{e:linear_oscillator} with $\sigma = 1$, $(p_0 , q_0)=(1, 0)$. The dashed reference lines have slopes 1, 1.5 and 2 respectively.}\label{f:linearosc_order}
\end{figure}

Next, we consider the numerical property of the linear growth  \eqref{e:linear_growth} for SSRK-0.5 scheme, that is the second moment of numerical solutions over time. We also use $M=3000$ sample paths to simulate the expectation and the time-step is chosen to be $h = 0.1$. From \cite{Hong2007}, the Euler-Maruyama scheme produces solutions whose second moment grows exponentially fast as is shown in the left-hand side of Figure \ref{fig:energy_euler} directly, so the Euler-Maruyama scheme is unacceptable for this problem in long time simulation. 

\begin{figure}[htbp]
	\centering
	\includegraphics[width=1\textwidth]{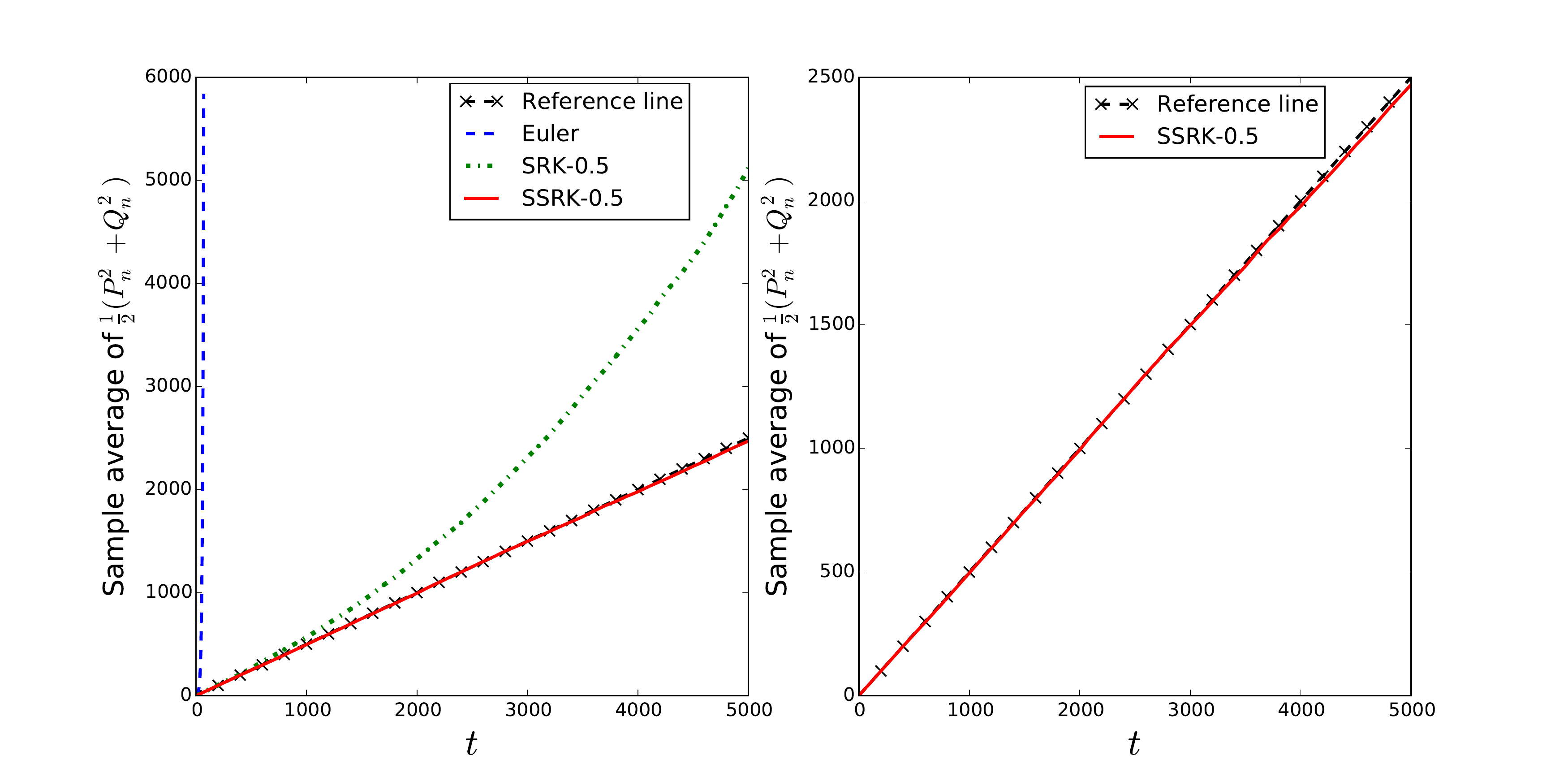}
	\caption{The growth rate for $E(H_0)$ along the numerical solutions with time interval $t\in[0,5000]$.}
	\label{fig:energy_euler}
\end{figure}

On the other hand, SRK-0.5 and SSRK-0.5 schemes behave much better in the long time run, which can be observed in Figure \ref{fig:energy_euler}. Moreover, it also displays that although the SRK-0.5 scheme is more stable than the Euler-Maruyama scheme in preserving this property, it also deviates from the original reference line after $t=1000$, but the SSRK-0.5 scheme coincides with the reference line much better than SRK-0.5 scheme. We solely test the second moment of numerical solutions created by the SSRK-0.5 scheme with larger time interval $t_n\in[0,5000]$ in right-hand side of Figure \ref{fig:energy_euler}, which shows that the SSRK-0.5 scheme preserves the linear growth property \eqref{e:order2_hamilton} with quite high accuracy.

Actually, using the fact in \eqref{e:e_increments},
the numerical solution arising from the SSRK-0.5 scheme for system \eqref{e:linear_oscillator} satisfies
\begin{equation}
\E\Big(H_0\big(P_n,Q_n\big)\Big) = \frac{1}{2}(p_0^2+q_0^2) + \frac{1}{2}\sigma^2 t_n + C(h)\sigma^2 t_n,
\end{equation}
where 
\begin{equation}\label{e:Ch_SSRK}
	C(h) = \frac{-16(4-\sqrt{6})h^2 + (4-\sqrt{6})h^4}{3(16+h^2)^2}.
\end{equation}
Compared with \eqref{e:linear_growth}, it is $C(h)$ \eqref{e:Ch_SSRK} that gives rise to the error in this linear growth property. However, it can be seen that $C(h) = O(h^2)$, as $h\rightarrow 0$. Thus, the expectation of $H_0$ along the numerical solution generated by SSRK-0.5 has also linear growth but the slope is slightly disturbed by $C(h)$.


In order to minimize the error in the linear growth \eqref{e:linear_growth} for a fixed step-size $h$, we set $\alpha_1\in (0,1)$ in Table \ref{tab:SSRKa} as a free parameter, so we get a parametric scheme containing $\alpha_1$. Calculating the expectation of $H_0(\cdot) $ along the numerical solution by this scheme leads to
\begin{equation}
	\E\Big(H_0\big(P_n,Q_n\big)\Big) = \frac{1}{2}(p_0^2+q_0^2) + \frac{1}{2}\sigma^2 t_n + C_{\alpha_1}(h)\sigma^2 t_n,
\end{equation}
where
\begin{equation}\label{e:Cah}
	C_{\alpha_1}(h) = -\frac{\left(\sqrt{6} \alpha_1+2 \sqrt{\frac{1}{\alpha_1}-1}-\sqrt{6}\right) h^2 \left((\alpha_1-1) \alpha_1 h^2+4\right)}{6 \sqrt{\frac{1}{\alpha_1}-1} \big((\alpha_1-1)^2 h^2+4\big) \left(\alpha_1^2 h^2+4\right)}.
\end{equation}

\begin{figure}[htbp]
	\centering
	\includegraphics[width=1\textwidth]{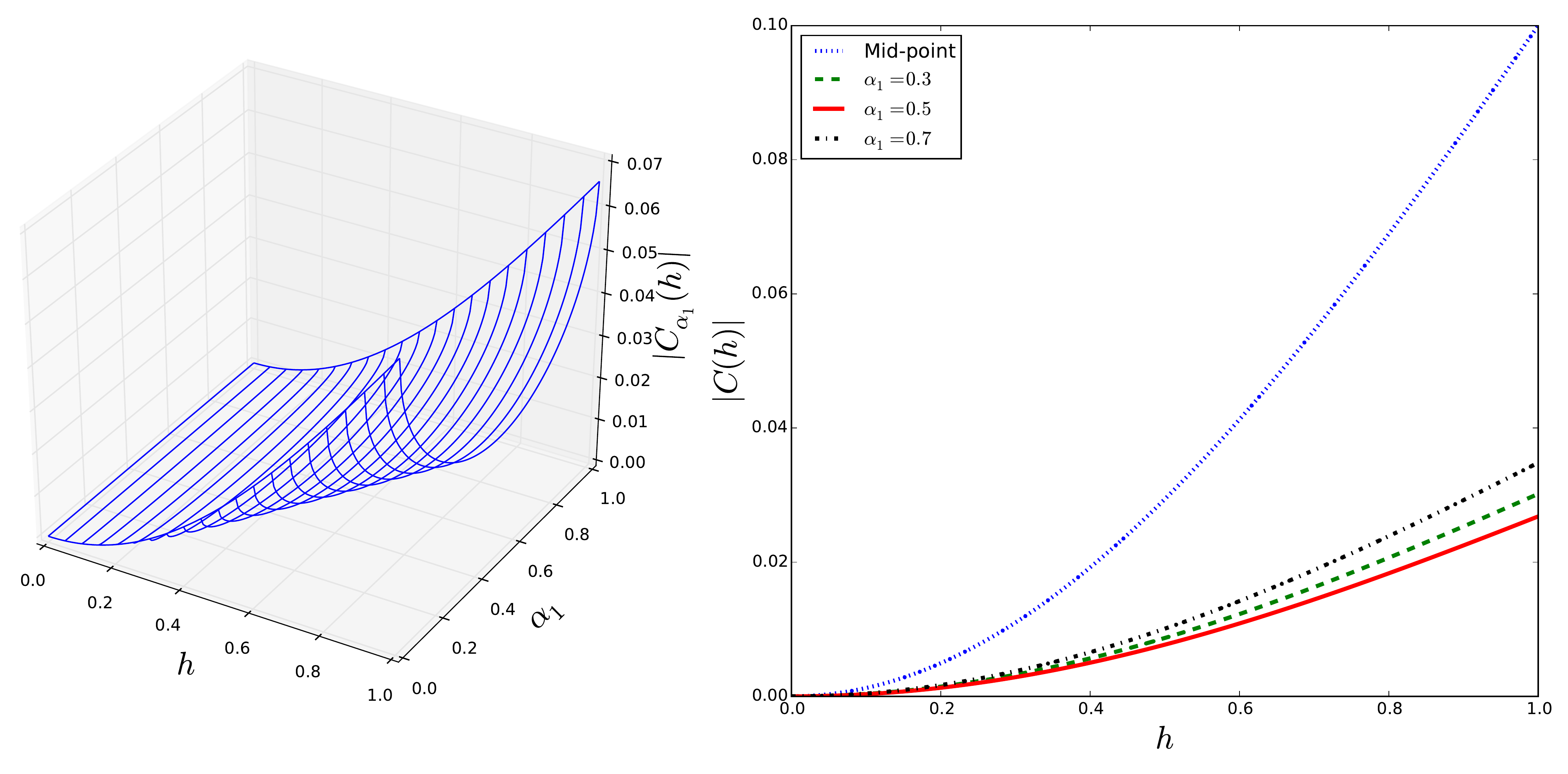}
	\caption{Left: surface of the $|C_{\alpha_1}(h)|$ \eqref{e:Cah} with  $h$ and $\alpha_1$. Right: the increase of $|C(h)|$ of mid-point method and SSRK-$\alpha_1$ with $\alpha_1$ = 0.3, 0.5, 0.7, respectively.}
	\label{fig:alpha1_symplectic}
\end{figure}

The left-hand side of Figure \ref{fig:alpha1_symplectic} plots the surface of $|C_{\alpha_1}(h)|$ with $h, \alpha_1 \in (0,1)$, from which we observe that there is only one $\alpha_1$ to minimize the $C_{\alpha_1}(h)$ for every $h$ in this domain. The optimal parameter $\alpha_{opt}$ is
\begin{equation}
	\argmin_{0<\alpha_1<1} |C_{\alpha_1}(h)| = \frac{1}{2}, \quad\text{for every} \, h\in (0,1),
\end{equation}
by common calculation. Therefore, when considering the preservation of the linear growth property \eqref{e:linear_growth}, the SSRK-0.5 scheme (i.e., $\alpha_1$=0.5)
is the optimal one of the SSRK-$\alpha_1$ family. For example, values of
$|C(h)|$ of four schemes with respect to $h\in (0,1)$ are shown in the right-hand side of Figure \ref{fig:alpha1_symplectic} where the Mid denotes the mid-point method (symplectic and of mean-square 1.0\cite{Hong2006mid}). From these curves, the one of $\alpha_1$ grows slowest, which  indicates that the SSRK-0.5 scheme shows better ability in preserving the linear growth property \eqref{e:linear_growth}.

Moreover, if we repeat the same analysis for the two-parameter SSRK-$\alpha_1$-$b_1$ methods in Table \ref{tab:SSRKab} and fix $\alpha_1=\frac{1}{2}$, then we obtain 
\begin{equation}
	C_{b_1}(h) = \frac{(3\sqrt{2-3b_1^2}-3b_1-4)(16-h^2)h^2}{3(16+h^2)}, \quad -\sqrt{\frac{2}{3}}<b_1<\sqrt{\frac{2}{3}}.
\end{equation} 
So when $b_1 = \frac{-2\pm\sqrt{6}}{6}$ we have $C_{b_1}(h)$=0, which means the mean-square order 2.0 scheme SSRK-0.5-$\frac{-2\pm\sqrt{6}}{6}$ preserves the linear growth property exactly for every $h$.

\subsection{Non-linear stochastic oscillator}
Next we consider a non-linear oscillator of the form:
\begin{equation}\label{e:nonl_oscillator}
\left\{
\begin{aligned}
dP &= (Q-Q^3)dt + \sigma_1 dW_1(t) + \sigma_2 dW_2(t), & P(0) = p_0, \\
dQ &= P dt, & Q(0) = q_0,
\end{aligned}\right.
\end{equation}
which is referred to as the double well problem \cite{Burrage2012}. It is also a  second-order Hamiltonian system with additive noise \eqref{e:order2_hamilton}, where $M = 1$ and $U(q) = -\frac{1}{2}q^2 + \frac{1}{4}q^4$. Then, $H_0(p,q) = \frac{1}{2}(p^2 - q^2) + \frac{1}{4}q^4$ and  $H_i(p,q) = -\sigma_i q$ for $i=1,\, 2$. It turns out that
\begin{equation}\label{e:nonlinear_growth}
	\E\Big(H_0\big(P(t),Q(t)\big)\Big) = H_0(p_0,q_0) + \frac{1}{2}(\sigma_1^2+\sigma_2^2) t,
\end{equation}
which shows a linear growth property as well.

\begin{table}[htbp]
	\centering
	\caption{Mean-square errors for \eqref{e:nonl_oscillator} with different schemes} 
	\label{tab:nonlinear_osc}
	\begin{tabular*}{1\textwidth}{@{\extracolsep{\fill}}@{}lcccccc@{}}
		\toprule
		$h$   & $2^{-2}$ & $2^{-3}$ & $2^{-4}$ & $2^{-5}$ & $2^{-6}$ & order \\ \midrule
		Euler & 6.35E-01 & 3.32E-01 & 1.53E-01 & 6.83E-02 & 2.90E-02 & 1.12\\
		SRK-0.5  & 3.04E-01& 6.55E-02& 1.53E-02& 3.64E-03& 8.62E-04 & 2.11\\
		SSRK-0.5  & 2.73E-02& 7.37E-03& 1.93E-03& 4.67E-04& 1.15E-04 & 1.99  \\ \bottomrule
	\end{tabular*}
\end{table}

As before, we check the mean-square convergence of SSRK methods for system \eqref{e:nonl_oscillator}. In this test, we set $\sigma_1=\sigma_2=1$,  $(p_0, q_0)=(1, 0)$, and $T=1$. The results of mean-square errors are listed in Table \ref{tab:nonlinear_osc}, and shown in Figure \ref{f:nonlinearosc_order} directly. From them, we observe that both SRK-0.5 and SSRK-0.5 are mean-square 2.0, which accords with the analysis of Theorem \ref{thm:2order_add}.

\begin{figure}[htbp]
	\centering
	\includegraphics[width=1\textwidth]{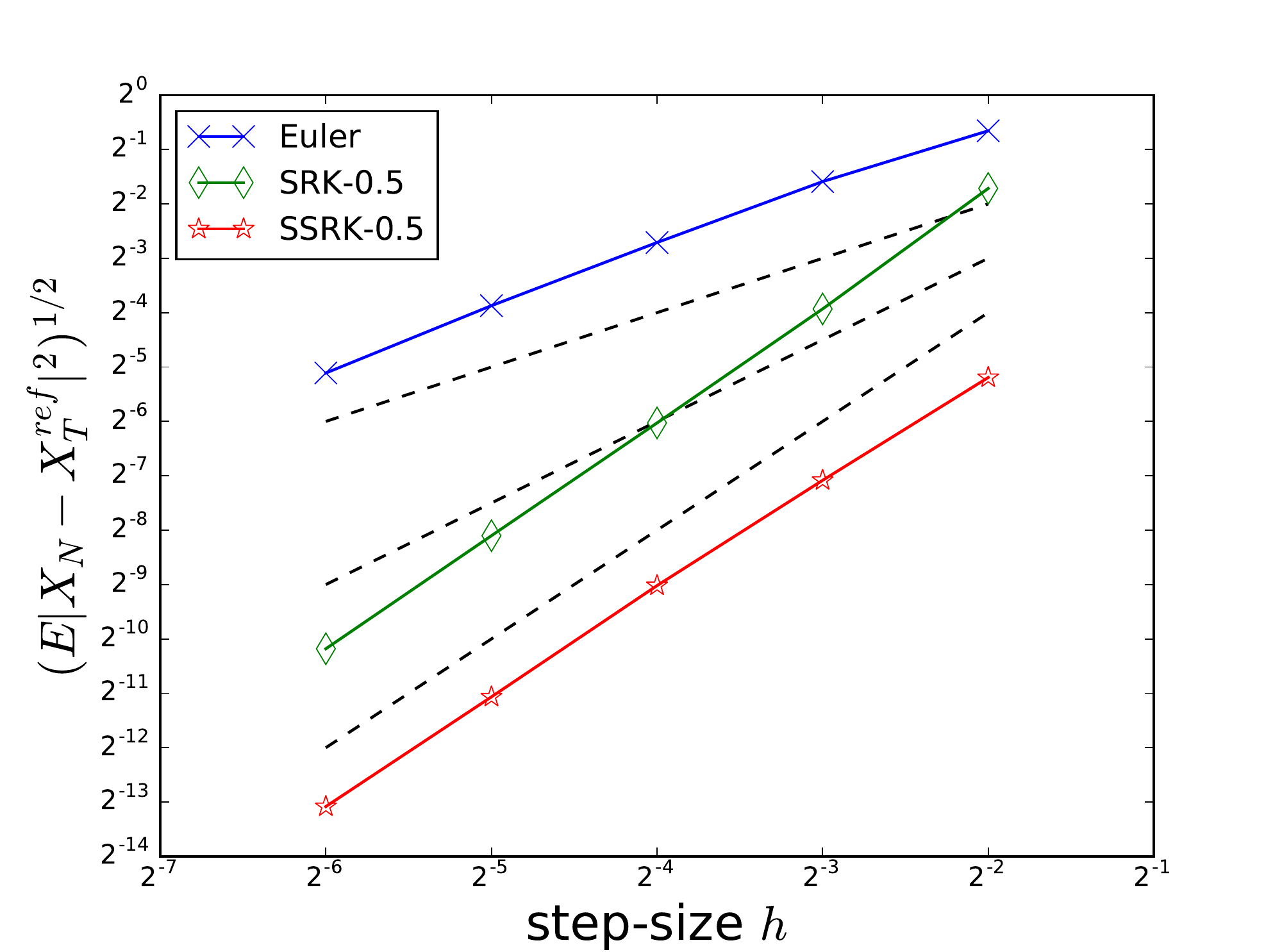}
	\caption{Mean-square errors at $T=1$ of  three methods for non-linear oscillator \eqref{e:nonl_oscillator} with $\sigma = 0.5$, $(p_0 , q_0)=(\sqrt{2}, \sqrt{2})$. The dashed reference lines have slopes 1, 1.5 and 2. respectively}\label{f:nonlinearosc_order}
\end{figure}

Note that the linear growth property \eqref{e:nonlinear_growth} also holds for this non-linear system, so we check it for the SRK methods numerically. Following \cite{Burrage2012}, we set $\sigma_1=0.5$, $\sigma_2=0$, initial value $(p_0, q_0)=(\sqrt{2},\sqrt{2})$, step-size $h=0.1$, and $T=40$. The left part of Figure \ref{fig:energy_Non} depicts average of $H_0$ along the numerical solutions by SRK-0.5 and SSRK-0.5 schemes over 50000 trajectories, and we do not plot the result of Euler scheme because it has exponential growth. We can observe that the SSRK-0.5 scheme preserves this linear property quite well which coincides with the reference line. Besides, the right part of Figure \ref{fig:energy_Non} demonstrates the average solution versus time of the SSRK-0.5 scheme, which is similar to the result in \cite{Burrage2012}.

\begin{figure}[htbp]
	\centering
	\includegraphics[width=1\textwidth]{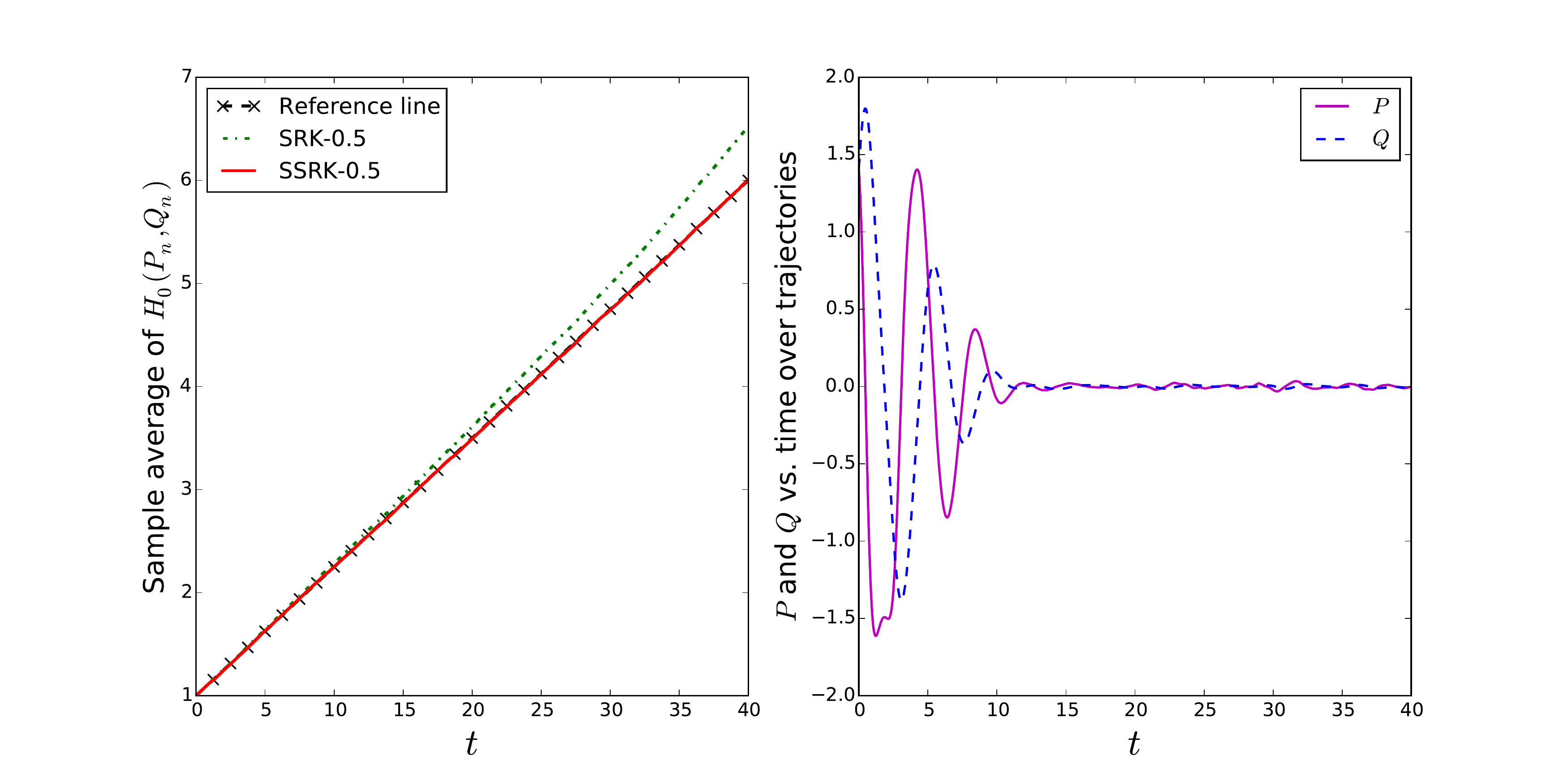}
	\caption{Non-linear oscillator \eqref{e:nonl_oscillator}. Left: growth rate for $E(H_0)$ \eqref{e:nonlinear_growth} along numerical solutions with time interval $t\in[0,40]$. Right: average of numerical solutions over 50000 trajectories.}
	\label{fig:energy_Non}
\end{figure}

\section{Conclusions}
In the present paper, we investigate  SRK methods aiming at constructing stochastic symplectic methods for stochastic Hamiltonian systems with additive noise. Using colored rooted tree theory, the conditions of mean-square order 1.5 in general case and 2.0 for the second-order Hamiltonian systems are obtained, under which we propose two classes of 2-stage SSRK methods (SSRK-$\alpha_1$ and SSRK-$\alpha_1$-$b_1$) combined with the common symplectic conditions. Numerical experiments are finally performed to the linear and non-linear Hamiltonian systems with additive noise to verify the mean-square order theory. Moreover, the linear growth property of this kind of system is especially taken into consideration. We find that the proposed SSRK methods have very good ability in preserving this property due to symplecticity. Especially for a linear oscillator with additive noise, choosing proper coefficients in the SSRK-$\alpha_1$-$b_1$ methods, we even obtain schemes exactly preserving the linear growth property, which are of mean-square order 2.0 as well.

For separable Hamiltonian systems with additive noise, we can also construct high order explicit and fully derivative-free symplectic schemes by using partitioned Runge--Kutta (PRK) methods \cite{Hairer2006} in the stochastic case, which deserves further investigation.

\bibliographystyle{elsarticle-num}
\bibliography{srkAdd}   

\end{document}